\documentclass[a4paper]{article}

\usepackage[english]{babel}
\usepackage[utf8x]{inputenc}
\usepackage{amsmath}
\usepackage{bm}

\usepackage[a4paper,top=3cm,bottom=2cm,left=3cm,right=3cm,marginparwidth=1.75cm]{geometry}

\usepackage{amsthm}
\usepackage{amsfonts}
\usepackage{graphicx}
\usepackage[colorlinks=true, allcolors=blue]{hyperref}
\usepackage{url}

\newtheorem{conjecture}{Conjecture}
\newtheorem{corollary}{Corollary}
\newtheorem{definition}{Definition}
\newtheorem{lemma}{Lemma}
\newtheorem{proposition}{Proposition}
\newtheorem{theorem}{Theorem}
\newtheorem{remark}{Remark}
\newtheorem{example}{Example}

\def\NN{\mathbb{N}}
\def\QQ{\mathbb{Q}}
\def\RR{\mathbb{R}}
\def\ZZ{\mathbb{Z}}

\title{On the finiteness and periodicity of the $p$--adic Jacobi--Perron algorithm}

\author{Nadir Murru and Lea Terracini \\
Department of Mathematics G. Peano, University of Torino\\
Via Carlo Alberto 10, 10123, Torino, ITALY\\
nadir.murru@unito.it, lea.terracini@unito.it}
\date{}

\begin{document}
\maketitle

\begin{abstract}
Multidimensional continued fractions (MCFs) were introduced by Jacobi and Perron in order to obtain periodic representations for algebraic irrationals, as it is for continued fractions and quadratic irrationals. Since continued fractions have been also studied in the field of $p$--adic numbers $\mathbb Q_p$, also MCFs have been recently introduced in $\mathbb Q_p$ together to a $p$--adic Jacobi--Perron algorithm. In this paper, we address th study of two main features of this algorithm, i.e., finiteness and periodicity. In particular, regarding the finiteness of the $p$--adic Jacobi--Perron algorithm our results are obtained by exploiting properties of some auxiliary integer sequences.
Moreover, it is known that a finite $p$--adic MCF represents $\mathbb Q$--linearly dependent numbers. We see that the viceversa is not always true and we prove that in this case infinite partial quotients of the MCF have $p$--adic valuations equal to $-1$. 
Finally, we show that a periodic MCF of dimension $m$ converges to algebraic irrationals of degree less or equal than $m+1$ and for the case $m=2$ we are able to give some more detailed results.
\end{abstract}

\noindent \textbf{Keywords:} continued fractions, finiteness, Jacobi--Perron algorithm, multidimensional continued fractions, p--adic numbers, periodicity \\
\textbf{2010 Mathematics Subject Classification:} 11J70, 12J25, 11J61.

\section{Introduction}

Multidimensional continued fractions (MCFs) were introduced by Jacobi \cite{Jac} in order to answer to a question posed by Hermite \cite{Her}, namely the existence of an algorithm defined over the real numbers that becomes eventually periodic when it processes algebraic irrationalities. In other words, Hermite asked for a generalization of the classical continued fraction algorithm that produces a periodic expansion if and only if the input is a quadratic irrational. The Jacobi algorithm deals with cubic irrationals and it was generalized to higher dimensions by Perron \cite{Per}. However, the Jacobi--Perron algorithm does not solve the Hermite problem, since it has never been proved that it becomes eventually periodic when processing algebraic irrationals. Many studies have been conducted on MCFs and their modifications, see, e.g., \cite{Ass}, \cite{Bea}, \cite{Ber}, \cite{Gar}, \cite{Ger}, \cite{Hen}, \cite{Kar}, \cite{Mur1}, \cite{Mur2}, \cite{Sch1}, \cite{Sch2}, \cite{Tam}.

In the 1970s, some authors started to study one--dimensional continued fractions over the $p$--adic numbers \cite{Bro1}, \cite{Rub}, \cite{Schn}. From these studies, it appeared difficult to find an algorithm working on the $p$--adic numbers that produces continued fractions having the same properties holding true over the real numbers (regarding approximation, finiteness and periodicity). In particular, no algorithm which provides a periodic expansion for all quadratic irrationalities has been found. Continued fractions over the $p$--adic numbers have been also studied  recently in several works, like \cite{Bed1}, \cite{Bed2}, \cite{Bro2}, \cite{Cap}, \cite{Han}, \cite{Lao}, \cite{Mil}, \cite{Oot}, \cite{Poo}, \cite{Til}, \cite{Weg}.

Motivated by the above researches, in \cite{MT}, the authors started the study of MCFs in $\mathbb Q_p$, providing some results about convergence and finiteness. In particular, they gave a sufficient condition on  the partial quotients of a MCF that ensures the convergence in $\mathbb Q_p$. Moreover, they presented an algorithm that terminates in a finite number of steps when rational numbers are processed. 
The scope of that work was introducing the subject and providing some general properties; the terminating input of this algorithm was not fully characterized and the periodicity properties were not studied at all. This paper represents a continuation of the previous work, extending the investigation in these two directions. In particular, in Section \ref{sec:pre}, we fix the notation and we show some properties that can also be of general interest for MCFs. Section \ref{sec:fin} is devoted to the finiteness of the $p$-adic Jacobi--Perron algorithm, providing some results that improves a previous work \cite{MT} and showing also some differences with the real case. In $\mathbb R$, it is known that the Jacobi--Perron of dimension $2$ detects rational dependence, i.e., it terminates in a finite number of steps if and only if it processes rational linearly dependent inputs. On the contrary we show that this is not always true in $\mathbb Q_p$ and we also prove that in this case infinite partial quotients of the MCF have $p$--adic valuations equal to $-1$. Moreover, we give a condition that ensures the finiteness of the $p$--adic Jacobi--Perron algorithm in any dimension in terms of the $p$--adic valuation of the partial quotient. In Section \ref{sec:per}, we study the periodicity of MCF in $\mathbb Q_p$. 
Specifically, we introduce the characteristic polynomial related to a purely periodic $p$--adic MCF and we see that, as in the real case, it admits a $p$--adic dominant root which generates a field containing the limits of the MCF.
Consequently, we see that a periodic MCF of dimension $m$ converges to algebraic irrationalities of degree less or equal than $m+1$ as in the real case. A further investigation on the characteristic polynomial allows to characterize some cases where the degree is maximum.
We conclude our work with a conjecture which, if proved to be true, would give a characterization of the MCFs arising by applying the $p$-adic Jacobi-Perron algorithm to $m$-tuples consisting of $\QQ$-linear dependent numbers.

\section{Preliminaries and notation} \label{sec:pre}

The classical Jacobi--Perron algorithm processes a $m$--tuple of real numbers $\bm{\alpha}_0 = (\alpha_0^{(1)}, \ldots, \alpha_0^{(m)})$ and represents them by means of a $m$--tuple of integer sequences $(\mathbf{a}^{(1)}, \ldots, \mathbf{a}^{(m)}) = ((a_n^{(1)})_{n\geq 0}, \ldots, (a_n^{(m)})_{n\geq 0})$ (finite or infinite) determined by the following iterative equations:
\begin{equation*}
\begin{cases} a_n^{(i)} = [\alpha_n^{(i)}], \quad i = 1, ..., m, \cr
\alpha_{n+1}^{(1)} = \cfrac{1}{\alpha_n^{(m)} - a_n^{(m)}}, \cr
\alpha_{n+1}^{(i)} = \cfrac{\alpha_n^{(i-1)} - a_n^{(i-1)}}{\alpha_n^{(m)} - a_n^{(m)}}, \quad i = 2, ..., m, \end{cases} n = 0, 1, 2, ...
\end{equation*}
The integer numbers $a_n^{(i)}$ and the real numbers $\alpha_n^{(i)}$, for $i = 1, \ldots m$ and $n = 0, 1, \ldots$, are called \emph{partial quotients} and \emph{complete quotients}, respectively. The sequences of the partial quotients represent the starting vector $\bm{\alpha}_0$ by means of the equations
\begin{equation} \label{eq:MCF}\begin{cases} \alpha_n^{(i-1)} = a_n^{(i-1)} + \cfrac{\alpha_{n+1}^{(i)}}{\alpha_{n+1}^{(1)}}, \quad i = 2, ..., m \cr
\alpha_n^{(m)} = a_n^{(m)} + \cfrac{1}{\alpha_{n+1}^{(i)}} \end{cases} n = 0, 1, 2, ...\end{equation}
which produce objects that generalize the classical continued fractions and  are usually called \emph{multidimensional continued fractions} (MCFs).

The Jacobi--Perron algorithm has been translated into the $p$--adic field in \cite{MT}, using the function $s$ defined below that will play the role of the floor function. We define the set
$$\mathcal{Y}=\ZZ\left [\frac 1 p\right ]\cap \left (-\frac p2,\frac p 2\right).$$

\begin{definition}
The \emph{Browkin $s$-function} $s:\QQ_p\longrightarrow \mathcal{Y}$ is defined by
$$s(\alpha)= \sum_{j=k}^0 x_jp^j,$$
for every $\alpha\in\QQ_p$ written as
\(\alpha=\sum_{j=k}^\infty x_jp^j, \hbox{with } k, x_j\in\ZZ \hbox{ and } x_j\in \left (-\frac p 2,\frac p 2\right).\)
\end{definition}

Hence, the $p$--adic Jacobi--Perron algorithm processes a $m$--tuple of $p$--adic numbers $\bm{\alpha}_0 = (\alpha_0^{(1)}, \ldots, \alpha_0^{(m)})$ by the following iterative equations
\begin{equation} \label{eq:alg} \begin{cases}  a_n^{(i)} = s(\alpha_n^{(i)}) \cr 
\alpha_{n+1}^{(1)} = \cfrac{1}{\alpha_n^{(m)} - a_n^{(m)}} \cr 
\alpha_{n+1}^{(i)} = \alpha_{n+1}^{(1)}\cdot (\alpha_n^{(i-1)} - a_n^{(i-1)}) = \cfrac{\alpha_n^{(i-1)} - a_n^{(i-1)}}{\alpha_n^{(m)} - a_n^{(m)}}, \quad i = 2, ..., m
\end{cases}
\end{equation}
for $n = 0, 1, 2, \ldots$, which define a $p$--adic MCF $[(a_0^{(1)}, a_1^{(1)}, \ldots), \ldots, (a_0^{(m)}, a_1^{(m)}, \ldots)]$ representing the starting $m$--tuple $\bm{\alpha}_0$ in the following way:
\begin{equation*}
\alpha_n^{(i)} = a_n^{(i)} + \cfrac{\alpha_{n+1}^{(i+1)}}{\alpha_{n+1}^{(1)}}, \quad \alpha_{n}^{(m)}= a_n^{(m)}  + \cfrac{1}{\alpha_{n+1}^{(1)}}
\end{equation*}
for $i = 1, \ldots, m-1$ and any $n \geq 0$. The partial quotients satisfy the following conditions:
\begin{equation} \label{eq:conv}
\begin{cases}
\lvert a_n^{(1)} \rvert > 1 \cr
\lvert a_n^{(i)} \rvert < \lvert a_n^{(1)} \rvert, \quad i = 2, \ldots, m
\end{cases}
\end{equation}
for any $n \geq 1$, where in the following $\lvert \cdot \rvert$ will always denote the $p$--adic norm. Moreover, for any $n \geq 1$, we have
\begin{align}  & \lvert a_n^{(1)} \rvert = \lvert \alpha_n^{(1)} \rvert,
\hbox{ and for $i = 2, \ldots, m$}\nonumber\\
& \lvert a_n^{(i)} \rvert =
\left\{ 
\begin{array}{l} \lvert \alpha_n^{(i)}\rvert
\hbox{ if $\lvert \alpha_n^{(i)} \rvert\geq 1$}\\
0 \hbox{ if $\lvert \alpha_n^{(i)} \rvert < 1$}\end{array}
\right .\label{eq:alpha-norme}\\
& \lvert \alpha_n^{(i)} \rvert < \lvert \alpha_n^{(1)} \rvert \quad . \nonumber
\end{align}
\begin{remark}
In \cite{MT}, the authors showed that equations \eqref{eq:conv} ensure the convergence of a MCF in $\mathbb Q_p$, i.e., given a sequence of partial quotients satisfying \eqref{eq:conv} (even if they are not obtained by a specific algorithm), then the corresponding MCF converges to a $m$--tuple of $p$--adic numbers.  
\end{remark}
Similarly to the real case, we have the $n$--\emph{th convergents} of a multidimensional continued fraction defined by
$$
Q^{(i)}_n=\frac {A^{(i)}_{n}}{A^{(m+1)}_{n}},
$$
for $i=1,\ldots, m$ and $n\in\NN$, where
\begin{equation} \label{eq:nd-conv} 
A^{(i)}_{-j} = \delta_{ij}, \quad A^{(i)}_{0} = a^{(i)}_0, \quad A^{(i)}_{n} = \sum_{j=1}^{m}a^{(j)}_{n}A^{(i)}_{n-j} + A_{n-m-1}^{(i)}
\end{equation}
for $i = 1, \ldots, m + 1$, $j = 1, \ldots, m$ and any $n \geq 1$, where $\delta_{ij}$ is the Kronecker delta. It can be proved by induction that for every $n \geq 1$ and $i = 1, \ldots, m$, we have
\begin{equation} \label{eq:alpha0}
\alpha_0^{(i)}=\frac {\alpha_n^{(1)}A^{(i)}_{n-1}+ \alpha_n^{(2)}A^{(i)}_{n-2}+\ldots +\alpha_n^{(m+1)}A^{(i)}_{n-m-1} }{\alpha_n^{(1)}A^{(m+1)}_{n-1}+ \alpha_n^{(2)}A^{(m+1)}_{n-2}+\ldots +\alpha_n^{(m+1)}A^{(m+1)}_{n-m-1} }
\end{equation} 
We can also use the following matrices for evaluating numerators and denominators of the convergents:
\begin{equation}\label{eq:matriciA}
 \mathcal{A}_n = \begin{pmatrix} a_n^{(1)} &1 &0&\ldots &0\\
  a_n^{(2)} &0 &1&\ldots &0\\
   \vdots& \vdots& \vdots& \vdots& \vdots\\
   a_n^{(m)} &0 &0&\ldots &1\\  1 &0 &0&\ldots &0\end{pmatrix}
\end{equation}
for any $n \geq 0$. Indeed, if we put
\begin{equation*}
\mathcal{B}_n = \begin{pmatrix} {A^{(1)}_{n}} &{A^{(1)}_{n-1}} &\ldots & {A^{(1)}_{n-m}}\\
 {A^{(2)}_{n}} &{A^{(2)}_{n-1}} &\ldots & {A^{(2)}_{n-m}}\\
  \vdots &\vdots&\vdots& \vdots\\
  {A^{(m+1)}_{n}} &{A^{(m+1)}_{n-1}} &\ldots & {A^{(m+1)}_{n-m}}\end{pmatrix} 
\end{equation*}
we have
\begin{equation*}
\mathcal{B}_n = \mathcal{B}_{n-1}\mathcal{A}_n = \mathcal{A}_0\mathcal{A}_1\ldots \mathcal{A}_n, \quad \det \mathcal{B}_n = (-1)^{m(n+1)}.
\end{equation*} 
We also recall some properties proved in \cite{MT}.
\begin{proposition}
With the notation above, we have
\begin{equation*}   |A_n^{(m+1)}|= \prod_{h=1}^n |a^{(1)}_h|.\end{equation*}
for any $n\geq 1$.
\end{proposition}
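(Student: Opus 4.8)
The plan is to prove the identity by strong induction on $n$, reading the recurrence \eqref{eq:nd-conv} in the bottom row $i=m+1$ and using the strict form of the ultrametric inequality. First I would record the initial data of this row: from $\mathcal{B}_0=\mathcal{A}_0$ one gets $A^{(m+1)}_0=1$, while $A^{(m+1)}_{-j}=\delta_{(m+1)j}=0$ for $j=1,\ldots,m$. Substituting these into \eqref{eq:nd-conv} with $n=1$ collapses the sum to its first summand and gives $A^{(m+1)}_1=a_1^{(1)}$, which is the base case.

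For the inductive step I would assume $|A^{(m+1)}_k|=\prod_{h=1}^k|a^{(1)}_h|$ for every $1\le k\le n-1$ (with the empty product equal to $1$, which also covers $|A^{(m+1)}_0|=1$), and then show that in
$$A^{(m+1)}_n = a_n^{(1)} A^{(m+1)}_{n-1} + \sum_{j=2}^{m} a^{(j)}_n A^{(m+1)}_{n-j} + A^{(m+1)}_{n-m-1}$$
the term $a_n^{(1)}A^{(m+1)}_{n-1}$ strictly dominates all the others in $p$--adic norm. Its norm is $|a_n^{(1)}|\prod_{h=1}^{n-1}|a_h^{(1)}|=\prod_{h=1}^{n}|a_h^{(1)}|$ by the inductive hypothesis. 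For $2\le j\le m$ the summand $a^{(j)}_n A^{(m+1)}_{n-j}$ vanishes when $n-j<0$, and otherwise has norm strictly less than $|a_n^{(1)}|\prod_{h=1}^{n-j}|a_h^{(1)}|$ by \eqref{eq:conv} (since $|a_n^{(j)}|<|a_n^{(1)}|$), which is $\le\prod_{h=1}^{n}|a_h^{(1)}|$ because every omitted factor $|a_h^{(1)}|$ with $n-j+1\le h\le n-1$ exceeds $1$, again by \eqref{eq:conv}; the last summand $A^{(m+1)}_{n-m-1}$ is handled in the same way (it is $0$ when $n\le m$, and otherwise its norm misses the factors $|a_h^{(1)}|$ for $n-m\le h\le n$, all $>1$). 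Since exactly one summand attains the maximal, and strictly largest, norm, the strong triangle inequality forces $|A^{(m+1)}_n|=|a_n^{(1)}|\,|A^{(m+1)}_{n-1}|=\prod_{h=1}^n|a_h^{(1)}|$, closing the induction.

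I expect the only point requiring care to be the bookkeeping of boundary indices: when $n\le m$, several of the terms $A^{(m+1)}_{n-j}$ and $A^{(m+1)}_{n-m-1}$ carry non-positive indices, and one must check that they are all either $0$ or equal to $A^{(m+1)}_0=1$, so that the domination estimate still goes through. Beyond that there is no real obstacle; the whole argument rests on the single fact that $|a_h^{(1)}|>1$ for all $h\ge1$, which makes any shorter product $\prod_{h=1}^{k}|a_h^{(1)}|$ with $k<n$ strictly smaller than $\prod_{h=1}^{n}|a_h^{(1)}|$.
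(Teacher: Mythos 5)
Your proof is correct: the base case, the handling of the negative-index terms ($A^{(m+1)}_{-j}=0$ for $j=1,\ldots,m$, $A^{(m+1)}_0=1$), and the strict domination of $a^{(1)}_nA^{(m+1)}_{n-1}$ via \eqref{eq:conv} and the ultrametric equality are all sound. Note that this paper only recalls the proposition from \cite{MT} without reproducing a proof, and your induction combined with the strong triangle inequality is precisely the standard argument expected for it, so there is nothing to add.
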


\begin{proposition} \label{prop:V}
Given the sequences $(V_n)_{n\geq-m}$, $i = 1, \ldots, m$, defined by
$$V_n^{(i)} = A_n^{(i)} - \alpha_0^{(i)} A_n^{(m+1)}$$
we have
\begin{enumerate}
\item $\displaystyle \lim_{n \rightarrow +\infty} \lvert V_n^{(i)} \rvert = 0$
\item $V^{(i)}_n=\sum_{j=1}^{m+1}a_n^{(j)}V^{(i)}_{n-j}$
\item $\sum_{j=1}^{m+1} \alpha^{(j)}_{n}V^{(i)}_{n-j}=0.$
\end{enumerate}
\end{proposition}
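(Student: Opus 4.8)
The plan is to establish the three items in the order (2), (3), (1), since (1) is the only one that invokes the $p$--adic metric, while (2) and (3) are purely formal identities.

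For (2), I would observe that by the defining recursion \eqref{eq:nd-conv} each of the sequences $(A_n^{(i)})_{n}$ and $(A_n^{(m+1)})_{n}$ satisfies, for $n\geq 1$, one and the same linear recurrence $X_n=\sum_{j=1}^{m}a_n^{(j)}X_{n-j}+X_{n-m-1}$. Hence the fixed linear combination $V_n^{(i)}=A_n^{(i)}-\alpha_0^{(i)}A_n^{(m+1)}$ satisfies it as well, and upon setting $a_n^{(m+1)}:=1$ this is exactly $V_n^{(i)}=\sum_{j=1}^{m+1}a_n^{(j)}V_{n-j}^{(i)}$.

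For (3), I would start from \eqref{eq:alpha0} (proved by induction, as already remarked, and where one uses the convention $\alpha_n^{(m+1)}=1$). Clearing denominators gives
\[
\alpha_0^{(i)}\sum_{j=1}^{m+1}\alpha_n^{(j)}A_{n-j}^{(m+1)}=\sum_{j=1}^{m+1}\alpha_n^{(j)}A_{n-j}^{(i)},
\]
and collecting terms yields $\sum_{j=1}^{m+1}\alpha_n^{(j)}\bigl(A_{n-j}^{(i)}-\alpha_0^{(i)}A_{n-j}^{(m+1)}\bigr)=0$, that is, $\sum_{j=1}^{m+1}\alpha_n^{(j)}V_{n-j}^{(i)}=0$.

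For (1), I would solve (3) for its leading term: since $\alpha_n^{(1)}\neq 0$,
\[
V_{n-1}^{(i)}=-\frac{1}{\alpha_n^{(1)}}\sum_{j=2}^{m+1}\alpha_n^{(j)}V_{n-j}^{(i)}.
\]
By \eqref{eq:conv} and \eqref{eq:alpha-norme}, for $n\geq 1$ one has $\lvert\alpha_n^{(1)}\rvert=\lvert a_n^{(1)}\rvert>1$ and $\lvert\alpha_n^{(j)}\rvert<\lvert\alpha_n^{(1)}\rvert$ for $j=2,\dots,m+1$ (with $\lvert\alpha_n^{(m+1)}\rvert=1$); as these norms are integral powers of $p$, each ratio $\lvert\alpha_n^{(j)}\rvert/\lvert\alpha_n^{(1)}\rvert$ is at most $1/p$. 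The ultrametric inequality then gives, after re-indexing,
\[
\lvert V_n^{(i)}\rvert\leq \frac{1}{p}\max_{1\leq j\leq m}\lvert V_{n-j}^{(i)}\rvert
\]
for all sufficiently large $n$, and a routine sliding-window argument concludes: the quantity $L_n:=\max_{0\leq j\leq m-1}\lvert V_{n-j}^{(i)}\rvert$ is then non-increasing and satisfies $L_{n+m}\leq p^{-1}L_n$, so $L_n\to 0$ and hence $\lvert V_n^{(i)}\rvert\to 0$. I expect the only delicate point to be keeping track of the index from which \eqref{eq:conv} and \eqref{eq:alpha-norme} are available, and handling the $j=m+1$ summand through the convention $\alpha_n^{(m+1)}=1$ so that it too is dominated by $\alpha_n^{(1)}$; once the contraction factor $1/p$ over a block of $m$ consecutive indices is secured, the decay of $\lvert V_n^{(i)}\rvert$ to $0$ is automatic.
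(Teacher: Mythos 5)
Your proposed proof is correct. Note, however, that this paper does not actually prove Proposition~\ref{prop:V}: it is explicitly recalled from \cite{MT}, so there is no in-paper argument to compare with; your write-up is a sound, self-contained reconstruction along the standard lines. Item (2) is indeed immediate by linearity, since by \eqref{eq:nd-conv} all the sequences $(A_n^{(i)})$, $i=1,\ldots,m+1$, satisfy the same recurrence with coefficients $a_n^{(1)},\ldots,a_n^{(m)},1$, and the convention $a_n^{(m+1)}=1$ (which the paper does adopt) turns this into the stated formula. Item (3) follows from \eqref{eq:alpha0} exactly as you say; the only point worth making explicit is that the denominator there is nonzero, which is guaranteed for instance by Proposition~\ref{prop:sum-prod}, since it equals $\prod_{j=1}^{n}\alpha_j^{(1)}$ and no complete quotient vanishes when the expansion is infinite. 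For item (1), your contraction argument is right: from \eqref{eq:conv} and \eqref{eq:alpha-norme} one has $\lvert\alpha_{n}^{(j)}\rvert<\lvert\alpha_{n}^{(1)}\rvert$ for $j=2,\ldots,m$ and $\lvert\alpha_n^{(m+1)}\rvert=1<\lvert\alpha_n^{(1)}\rvert$ for $n\geq 1$, and since all these norms are integer powers of $p$ (or zero) the ratios are bounded by $1/p$, giving
\[
\lvert V_n^{(i)}\rvert\leq \frac 1 p \max_{1\leq j\leq m}\lvert V_{n-j}^{(i)}\rvert \qquad (n\geq 0),
\]
from which your sliding-window maximum $L_n$ is non-increasing with $L_{n+m}\leq p^{-1}L_n$, hence tends to $0$. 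The only bookkeeping you flagged (the starting index for \eqref{eq:conv}--\eqref{eq:alpha-norme}, namely $n\geq 1$) is handled correctly by applying (3) at index $n+1$.
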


Finally, we prove the following propositions that will be useful in the next sections.

\begin{proposition} \label{prop:sum-prod}
For $n\geq 1$, we have
$$\sum_{i=1}^{m+1}\alpha^{(i)}_n A^{(m+1)}_{n-i}=\prod_{j=1}^n \alpha^{(1)}_j.$$
\end{proposition}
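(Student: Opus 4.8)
The plan is to prove the identity by induction on $n$, treating $D_n:=\sum_{i=1}^{m+1}\alpha^{(i)}_n A^{(m+1)}_{n-i}$ (the left--hand side, which is exactly the denominator appearing in \eqref{eq:alpha0}) as the quantity to be evaluated. Throughout I use the conventions already implicit in \eqref{eq:alpha0} and in Proposition \ref{prop:V}, namely $\alpha^{(m+1)}_n=1$ and $a^{(m+1)}_n=1$ for every $n$; with these, the representation relations displayed just after \eqref{eq:alg} read uniformly $\alpha^{(i)}_n\alpha^{(1)}_{n+1}=a^{(i)}_n\alpha^{(1)}_{n+1}+\alpha^{(i+1)}_{n+1}$ for $i=1,\dots,m$, while from \eqref{eq:nd-conv} one has the boundary values $A^{(m+1)}_0=1$ and $A^{(m+1)}_{-j}=0$ for $j=1,\dots,m$.

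For the base case $n=1$, all terms of $D_1$ with $i\geq 2$ vanish since $A^{(m+1)}_{1-i}=0$ there, while the $i=1$ term is $\alpha^{(1)}_1 A^{(m+1)}_0=\alpha^{(1)}_1$; hence $D_1=\alpha^{(1)}_1=\prod_{j=1}^{1}\alpha^{(1)}_j$. For the inductive step it suffices to prove $D_{n+1}=\alpha^{(1)}_{n+1}D_n$. I multiply $D_n$ by $\alpha^{(1)}_{n+1}$ and, in each term with $i\leq m$, replace $\alpha^{(1)}_{n+1}\alpha^{(i)}_n$ by $a^{(i)}_n\alpha^{(1)}_{n+1}+\alpha^{(i+1)}_{n+1}$; the remaining term $i=m+1$ simply gives $\alpha^{(1)}_{n+1}A^{(m+1)}_{n-m-1}$ because $\alpha^{(m+1)}_n=1$. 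Grouping the coefficients of $\alpha^{(1)}_{n+1}$ yields $\alpha^{(1)}_{n+1}\bigl(\sum_{i=1}^{m}a^{(i)}_n A^{(m+1)}_{n-i}+A^{(m+1)}_{n-m-1}\bigr)$, which by the recurrence \eqref{eq:nd-conv} equals $\alpha^{(1)}_{n+1}A^{(m+1)}_n$; the leftover terms are $\sum_{i=1}^{m}\alpha^{(i+1)}_{n+1}A^{(m+1)}_{n-i}$, and after shifting the summation index by one this equals $\sum_{i=2}^{m+1}\alpha^{(i)}_{n+1}A^{(m+1)}_{(n+1)-i}$. Adding the two contributions reconstitutes precisely $\sum_{i=1}^{m+1}\alpha^{(i)}_{n+1}A^{(m+1)}_{(n+1)-i}=D_{n+1}$, which closes the induction.

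The calculation is pure bookkeeping, so the only real point of care — and the one I would present most explicitly — is the consistent handling of the ``$(m+1)$--labelled'' quantities $\alpha^{(m+1)}_n$, $a^{(m+1)}_n$ and $A^{(m+1)}_{n-m-1}$: the representation relations after \eqref{eq:alg} single out the coordinate $i=m$, and the recurrence \eqref{eq:nd-conv} writes the last term with coefficient $1$ rather than $a^{(m+1)}_n$, so one must verify that the uniform convention is compatible with both before running the telescoping argument. Equivalently, one can package the whole computation as the identity $\mathcal{A}_n\,(\alpha^{(1)}_{n+1},\dots,\alpha^{(m+1)}_{n+1})^{T}=\alpha^{(1)}_{n+1}\,(\alpha^{(1)}_{n},\dots,\alpha^{(m+1)}_{n})^{T}$ with $\mathcal{A}_n$ as in \eqref{eq:matriciA}, iterate it using $\mathcal{B}_{n-1}=\mathcal{A}_0\cdots\mathcal{A}_{n-1}$, and read off the last component; this is the route I would take if the index bookkeeping in the direct approach became unwieldy.
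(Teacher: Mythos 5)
Your proof is correct and is essentially the paper's argument: both proceed by induction, with the base case $\alpha^{(1)}_1A^{(m+1)}_0=\alpha^{(1)}_1$ and an inductive step that amounts to the single identity $\sum_{i=1}^{m+1}\alpha^{(i)}_{n+1}A^{(m+1)}_{n+1-i}=\alpha^{(1)}_{n+1}\sum_{i=1}^{m+1}\alpha^{(i)}_{n}A^{(m+1)}_{n-i}$, obtained from the relations $\alpha^{(i)}_n\alpha^{(1)}_{n+1}=a^{(i)}_n\alpha^{(1)}_{n+1}+\alpha^{(i+1)}_{n+1}$ together with the recurrence \eqref{eq:nd-conv} (the paper just runs the same computation in the reverse direction, dividing by $\alpha^{(1)}_n$ instead of multiplying by $\alpha^{(1)}_{n+1}$). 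Your remark about the matrix formulation via \eqref{eq:matriciA} is a harmless repackaging of the same step.
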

\begin{proof} We proceed by induction on $n$. If $n=1$ the left-hand side is equal to $\alpha^{(1)}_1 A^{(m+1)}_{0}= \alpha^{(1)}_1$. For $n>1$ we can use the inductive hypothesis and write
\begin{eqnarray*}
\prod_{j=1}^{n-1} \alpha^{(1)}_j&=& \sum_{i=1}^{m+1}\alpha^{(i)}_{n-1}A^{(m+1)}_{n-1-i}\\
&=&A^{(m+1)}_{n-m-2} + \sum_{i=1}^{m}\alpha^{(i)}_{n-1}A^{(m+1)}_{n-1-i}\\ &=&A^{(m+1)}_{n-m-2} + \sum_{i=1}^{m}\left (a^{(i)}_{n-1}+\frac{\alpha^{(i+1)}_n}{\alpha^{(1)}_n}\right )A^{(m+1)}_{n-1-i} \\ &=& (\sum_{i=1}^{m}a^{(i)}_{n-1}A^{(m+1)}_{n-1-i} + A^{(m+1)}_{n-m-2} ) + \sum_{i=1}^{m}\left (\frac{\alpha^{(i+1)}_n}{\alpha^{(1)}_n}\right )A^{(m+1)}_{n-1-i}\\
&=& A_{n-1}^{(m+1)} + \sum_{i=1}^{m}\left (\frac{\alpha^{(i+1)}_n}{\alpha^{(1)}_n}\right )A^{(m+1)}_{n-1-i}\\
&=& \frac 1 {\alpha^{(1)}_n} \left( \alpha^{(1)}_nA_{n-1}^{(m+1)} + \sum_{i=1}^{m}\alpha^{(i+1)}_nA^{(m+1)}_{n-1-i} \right) \\
&=& \frac 1 {\alpha^{(1)}_n} \sum_{i=1}^{m+1}\alpha^{(i)}_nA^{(m+1)}_{n-i}. \end{eqnarray*}
proving the claim.
\end{proof}

\begin{proposition}\label{prop:boundA}
For $i=1,\ldots, m$ and $n\in\NN$ we have
$$|A^{(i)}_n|_\infty< \frac{p^{n+1}} 2,$$
where $|\cdot|_\infty$ denotes the Euclidean norm.
\end{proposition}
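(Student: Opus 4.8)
The plan is to prove the estimate by strong induction on $n$, using only that every partial quotient lies in $\mathcal Y\subseteq(-\tfrac p2,\tfrac p2)$, so that $|a_n^{(j)}|_\infty<\tfrac p2$ for all $n\ge 0$ and all $j$; the $p$-adic conditions \eqref{eq:conv} are not needed here, only the archimedean size of the $a_n^{(j)}$ matters. The base case $n=0$ is immediate, since $A_0^{(i)}=a_0^{(i)}=s(\alpha_0^{(i)})\in\mathcal Y$ gives $|A_0^{(i)}|_\infty<\tfrac p2=\tfrac{p^{0+1}}2$.

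For the inductive step fix $n\ge 1$ and expand $A_n^{(i)}$ via the recurrence \eqref{eq:nd-conv}, separating the summands according to whether the lower index is $\ge 0$ or $<0$. The summands of nonnegative index are controlled by the inductive hypothesis: $a_n^{(j)}A^{(i)}_{n-j}$ has absolute value at most $\tfrac p2\cdot\tfrac{p^{n-j+1}}{2}$, and summing over the relevant $j$ (together with $A^{(i)}_{n-m-1}$ when $n\ge m+1$) produces a geometric sum bounded by $\tfrac p4\sum_{j\ge 1}p^{n-j+1}<\tfrac{p^{n+2}}{4(p-1)}$. The summands of negative index are precisely those that would break the induction if treated naively, because the asserted bound $p^{k+1}/2$ is $<1$ --- hence false --- for negative $k$; so they must be handled apart. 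Here $A^{(i)}_{-\ell}=\delta_{i\ell}$, and the key observation is that for a fixed $i$ at most one of the Kronecker deltas that can occur is nonzero, namely $\delta_{i,j-n}$ for $n<j\le m$ and $\delta_{i,m+1-n}$, because the indices $j-n\in\{1,\dots,m-n\}$ and $m+1-n$ are pairwise distinct. Consequently the whole ``initial-condition'' contribution has absolute value at most $\max\bigl(|a_n^{(i+n)}|_\infty,1\bigr)<\tfrac p2$.

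Putting the two estimates together gives $|A_n^{(i)}|_\infty<\tfrac{p(p^{n+1}-p)}{4(p-1)}+\tfrac p2$ when $1\le n\le m$, and $|A_n^{(i)}|_\infty<\tfrac{p^{n+2}}{4(p-1)}+\tfrac{p^{n-m}}{2}$ when $n\ge m+1$; in both cases it remains to check that the right-hand side is $<\tfrac{p^{n+1}}2$. After dividing by $p^{n+1}$ and clearing denominators this becomes an elementary inequality whose tightest instance ($n$ small) reduces to $p^n>1$, and so it holds for every $n\ge 1$. The only genuinely delicate point --- and the one I expect to be the main obstacle --- is exactly the separate treatment of the initial terms $A^{(i)}_{-\ell}$: they cannot be absorbed into the inductive bound, so one must control them through the ``at most one nonzero $\delta$, of size $<\tfrac p2$'' remark; once that is in place, the rest is a routine geometric-series estimate. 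When $p=2$, where $\mathcal Y$ forces $|a_n^{(j)}|_\infty<1$, the same scheme goes through with the obvious modifications.
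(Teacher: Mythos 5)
Your proof is correct and takes essentially the same route as the paper's: induction on $n$ via the recurrence \eqref{eq:nd-conv}, with the single surviving initial-condition (Kronecker delta) term bounded by $\max(|a_n^{(i+n)}|_\infty,1)<\tfrac p2$ and a geometric-series estimate closing the induction (the paper uses the exact finite sums for $n\le m$ and $n>m$, while you use a slightly cruder infinite series, which still suffices for odd $p$). The only loose point is your closing remark about $p=2$, where the infinite-series bound as stated would not close the inductive step --- but that case is degenerate anyway, since Browkin's $s$-function (digits in $\ZZ\cap(-\tfrac p2,\tfrac p2)$) requires $p$ odd.
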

\begin{proof}
We prove the thesis by induction. For $n=0$, 
$$	|A^{(i)}_0|_\infty =|a^{(i)}_0|_\infty <\frac p 2;$$
for $n\leq m$
$$A^{(i)}_n= a^{(1)}_nA^{(i)}_{n-1}+\ldots + a^{(n)}_nA^{(i)}_0+a^{(n-i)}$$
By induction hypothesis, and since $|a^{(i)}_k|_\infty <\frac p 2$ for every $k$, 
$$|A^{(i)}_n|_\infty < \frac {p^{n+1}} 4+\frac {p^{n}} 4+\ldots +\frac{p^2} 4 + \frac p 2=\frac {p^2} 4\left (\frac {p^{n}-1}{p-1}\right )+\frac p 2< \frac {p^{n+1}} 2.$$
For $n>m$ , we have 
$$A^{(i)}_n= a^{(1)}_nA^{(i)}_{n-1}+\ldots + a^{(m)}_nA^{(i)}_{n-m}+A^{(i)}_{n-m-1}.$$
Again by induction hypothesis, and since $|a^{(i)}_k|_\infty <\frac p 2$ for every $k$, 
$$|A^{(i)}_n|_\infty < \frac {p^{n+1}} 4+\frac {p^{n}} 4+\ldots +\frac{p^{n-m+2}} 4 + \frac{p^{n-m}} 2=\frac {p^{n-m+2}} 4\left (\frac {p^{m}-1}{p-1}\right )+ \frac{p^{n-m}} 2< \frac {p^{n+1}} 2.$$
\end{proof}

\begin{proposition} \label{prop:minors}
Given the MCF $[(a_0^{(1)}, a_1^{(1)}, \ldots), \ldots, (a_0^{(m)}, a_1^{(m)}, \ldots)]$ 
\begin{itemize}
\item[a)] every minor of $\mathcal{B}_n$ is a polynomial  in $\ZZ[a^{(i)}_j, i=1,\ldots , m,\  j=0,\ldots n]$ and each monomial has the form $$\lambda c_0c_1\ldots c_{n}$$ where $\lambda\in\ZZ$ and $c_j=1$ or $c_j=a^{(i)}_j$ for some $i=1,\ldots m$.
 \item[b)]  The summand $\lambda a^{(1)}_0\ldots a^{(1)}_n$ does not appear in any principal minor of $\mathcal{B}_n$ except for the $1\times  1$ minor obtained by removing all rows and columns indexed by $2,\ldots, m+1 $; in this case $\lambda=\pm 1$.
\end{itemize}
\end{proposition}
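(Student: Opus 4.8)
The plan is to derive both parts from the (generalized) Cauchy--Binet formula, after a direct inspection of the minors of a single factor $\mathcal{A}_j$. The key preliminary remark is that in $\mathcal{A}_j$ the only non-constant entries lie in the first column, which has entries $a_j^{(1)},\dots,a_j^{(m)},1$, while for $c=2,\dots,m+1$ the $c$-th column has a single nonzero entry, a $1$ in row $c-1$. Hence, for index sets $K,L\subseteq\{1,\dots,m+1\}$ with $|K|=|L|$, the minor $\mathcal{A}_j[K\mid L]$ (the determinant of the submatrix on rows $K$ and columns $L$) is an integer if $1\notin L$, and otherwise — expanding along the first column, whose cofactors are minors of a $0$--$1$ matrix of permutation type — it is a polynomial of degree at most $1$ in $a_j^{(1)},\dots,a_j^{(m)}$ with integer coefficients. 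In particular each of its monomials has the form $\lambda$ or $\lambda a_j^{(i)}$ with $\lambda\in\ZZ$, and the coefficient of $a_j^{(1)}$ is zero unless $1\in K$, $1\in L$, $2\notin L$ and $K\setminus\{1\}=\{c-1:c\in L\setminus\{1\}\}$, in which case it equals $\pm1$.

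For part a) I would argue by induction on $n$, the case $n=0$ being exactly the preliminary remark applied to $\mathcal{A}_0$. For $n\ge1$, write $\mathcal{B}_n=\mathcal{B}_{n-1}\mathcal{A}_n$; Cauchy--Binet gives $\mathcal{B}_n[I\mid J]=\sum_{K}\mathcal{B}_{n-1}[I\mid K]\,\mathcal{A}_n[K\mid J]$, the sum running over $K$ with $|K|=|I|$. By the inductive hypothesis each $\mathcal{B}_{n-1}[I\mid K]$ is a $\ZZ$-linear combination of monomials $c_0c_1\cdots c_{n-1}$ of the stated shape, and by the preliminary remark each $\mathcal{A}_n[K\mid J]$ is a $\ZZ$-linear combination of $1$ and the $a_n^{(i)}$; multiplying and summing over $K$ yields a $\ZZ$-linear combination of monomials $c_0c_1\cdots c_n$ of the stated shape.

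For part b) I would iterate Cauchy--Binet all the way down to the factors: for a principal index set $S$,
\[
\mathcal{B}_n[S\mid S]=\sum_{K_0,\dots,K_{n-1}}\ \prod_{t=0}^{n}\mathcal{A}_t[K_{t-1}\mid K_t],\qquad K_{-1}=K_n=S.
\]
Since the variables of distinct levels $t$ are disjoint, the coefficient of $a^{(1)}_0a^{(1)}_1\cdots a^{(1)}_n$ in this expression equals the sum over chains $(K_0,\dots,K_{n-1})$ of the products $\prod_{t=0}^{n}(\text{coefficient of }a_t^{(1)}\text{ in }\mathcal{A}_t[K_{t-1}\mid K_t])$. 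By the preliminary remark a chain contributes nontrivially only if, for every $t=0,\dots,n$, one has $1\in K_t$, $2\notin K_t$ and $K_{t-1}\setminus\{1\}=\{c-1:c\in K_t\setminus\{1\}\}$. Composing these relations from $t=n$ down to $t=0$ and using $K_{-1}=K_n=S$ forces $S\setminus\{1\}=\{c-(n+1):c\in S\setminus\{1\}\}$; a nonempty finite set of positive integers cannot be carried onto itself by subtracting the positive integer $n+1$ (consider its minimum), so $S\setminus\{1\}=\emptyset$, i.e.\ $S=\{1\}$. Thus the monomial is absent from every principal minor with $S\ne\{1\}$. When $S=\{1\}$, which is exactly the $1\times1$ minor $\mathcal{B}_n[\{1\}\mid\{1\}]=A^{(1)}_n$, the only admissible chain is $K_t=\{1\}$ for all $t$, for which $\prod_t\mathcal{A}_t[\{1\}\mid\{1\}]=\prod_t a_t^{(1)}$, so the coefficient is $\pm1$ (indeed $1$); alternatively, this case follows from a) together with the recursion $A^{(1)}_n=a^{(1)}_nA^{(1)}_{n-1}+\cdots$ and $A^{(1)}_0=a^{(1)}_0$, since by a) no other summand of that recursion can involve the variable $a^{(1)}_{n-1}$.

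Part a) is routine once Cauchy--Binet is in place; the real work is the combinatorial bookkeeping in b) — pinning down exactly when a minor of $\mathcal{A}_j$ carries the variable $a_j^{(1)}$, tracking how that condition forces the successive index sets to shift down by one, and recognizing that returning to the starting set $S$ after $n+1$ such shifts is incompatible with $S$ containing any index besides $1$. That combinatorial step, together with checking that the exceptional coefficient comes out $\pm1$, is the only delicate point I anticipate.
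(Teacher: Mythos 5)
Your proposal is correct, and for part b) it follows a genuinely different route from the paper. The paper proves both parts by induction on $n$: it expands the first column of $\mathcal{B}_{n+1}$ by multilinearity (using the recurrence \eqref{eq:primacolonna}) to reduce a minor of $\mathcal{B}_{n+1}$ to $a^{(i)}_{n+1}$ times minors of $\mathcal{B}_n$, and in part b) it carries out a rather delicate bookkeeping of which rows and columns must be absent from a principal submatrix carrying the summand (e.g.\ that the second row and column must be removed, and a separate base-case analysis for $n=0$). Your part a) is essentially the same inductive step, only phrased through Cauchy--Binet applied to $\mathcal{B}_n=\mathcal{B}_{n-1}\mathcal{A}_n$ rather than through column multilinearity. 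Your part b), however, replaces the paper's induction by a global argument: iterated Cauchy--Binet over all factors $\mathcal{A}_0,\dots,\mathcal{A}_n$, the exact criterion for a minor $\mathcal{A}_t[K_{t-1}\mid K_t]$ to carry $a^{(1)}_t$ (namely $1\in K_{t-1}\cap K_t$ and $K_{t-1}\setminus\{1\}=\{c-1:c\in K_t\setminus\{1\}\}$, with coefficient $\pm1$), and the observation that returning to the same index set $S$ after $n+1$ downward shifts is impossible unless $S=\{1\}$ (minimum argument). All the steps check out: the independence of the variables across levels justifies taking the product of the per-level coefficients, the intermediate sets in Cauchy--Binet have cardinality $|S|$, and the exceptional case $S=\{1\}$ gives $\mathcal{B}_n[\{1\}\mid\{1\}]=A^{(1)}_n$ with coefficient $1$. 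What your approach buys is a cleaner and more transparent treatment of b) — an exact characterization of the contributing chains, avoidance of the paper's somewhat terse case analysis, and the sharper conclusion $\lambda=1$ — at the cost of invoking the (iterated) Cauchy--Binet formula, whereas the paper's argument uses nothing beyond multilinearity of the determinant.
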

\begin{proof}
\ \\
$a)$
We prove the thesis by induction on $n$. For $n=0$, we have $\mathcal B_n = \mathcal A_0$ and the thesis immediately follows. Suppose now that the statement holds for $n$ and consider $\mathcal{B}_{n+1}$. Let $M$ be a square submatrix of $\mathcal{B}_{n+1}$. If $M=\mathcal{B}_{n+1}$ then $\det(M)=\pm 1$ and we are done. So we suppose that some rows and columns miss in $M$. If $M$ does not contain the first column, then $M$ is a square submatrix of $\mathcal{B}_n$ and the result holds by inductive hypothesis. Therefore we suppose that $M$ contains the first column of $\mathcal{B}_{n+1}$ which is
         \begin{equation}\label{eq:primacolonna}\begin{pmatrix} A^{(1)}_{n+1}\\ A^{(2)}_{n+1}\\
         \vdots \\
         A^{(m+1)}_{n+1}\end{pmatrix} =
         \begin{pmatrix} a^{(1)}_{n+1}A^{(1)}_n+a^{(2)}_{n+1}A^{(1)}_{n-1}+\ldots + a^{(m)}_{n+1} A^{(1)}_{n-m+1}+A^{(1)}_{n-m}\\
         a^{(1)}_{n+1}A^{(2)}_n+a^{(2)}_{n+1}A^{(2)}_{n-1}+\ldots + a^{(m)}_{n+1} A^{(2)}_{n-m+1}+A^{(2)}_{n-m}\\
         \vdots\\
         a^{(1)}_{n+1}A^{(m+1)}_n+a^{(2)}_{n+1}A^{(m+1)}_{n-1}+\ldots +a^{(m)}_{n+1} A^{(m+1)}_{n-m+1}+A^{(m+1)}_{n-m}\end{pmatrix}
         \end{equation}
By the properties of the determinant, $\det(M)$ is the sum for $i=1,\ldots, m+1$ of the determinants of all matrices $M_i$ where $M_i$ is obtained from $M$ by replacing the first column by a  subvector of 
         $$ a^{(i)}_{n+1} \begin{pmatrix} A^{(1)}_{n+1-i}\\
         A^{(2)}_{n+1-i}\\
         \vdots \\
         A^{(m+1)}_{n+1-i}\end{pmatrix}$$
(to get an uniform notation, we put $a^{(m+1)}_k=1$, for every $k\in\NN$ ). Then we see that either two columns of $M_i$ are proportional, so that $\det(M_i)=0$, or $\det(M_i)=\pm a^{(i)}_{n+1}\det(M'_i)$ where $M'_i$ is a submatrix of $\mathcal{B}_n$. Then the claim holds by inductive hypothesis.\\
$b)$ Let $M$ be the square submatrix obtained from $\mathcal{B}_n$  by removing all rows and columns indexed by $I\subseteq\{1,\ldots, m+1\}$, and suppose that the summand   $\lambda a^{(1)}_0\ldots a^{(1)}_n$ appears in $M$. Then by $a)$, $M$ must contain the first column of $\mathcal{B}_n$, so that it must contain also the first row. Moreover, since $\det(\mathcal{B}_n)=\pm 1$, at least one row and the corresponding column are missing.  We argue again by induction on $n$. If $n=0$, then the last row must miss, (otherwise $\det(M)\in\{1,0\}$) so that the last column too must miss; then the row indexed by $m$ has the form $(a_0^{(m)},0,\ldots, 0)$ and this implied that it must miss, unless $m=1$, so that column $m$ is missing and so on. It follows that $I={2,\ldots, m+1}$, $\lambda=1$. Now suppose that the result holds for $\mathcal{B}_n$. The first column of $\mathcal{B}_{n+1}$ being as in \eqref{eq:primacolonna}, wee deduce by  $a)$ that $\lambda a^{(1)}_0\ldots a^{(1)}_n$ must be a summand of $\det(M_1)$, where $M_1$  is obtained from $M$ by replacing the first column by a  subvector of 
         $$ a^{(1)}_{n+1} \begin{pmatrix} A^{(1)}_{n}\\
         A^{(2)}_{n}\\
         \vdots \\
         A^{(m+1)}_{n}\end{pmatrix}.$$
         Then we see that the second column (and the second row) must miss in $M$ (otherwise $\det(M)=0$). Therefore $\det(M_1)=a^{(1)}_{n+1}\det(M'_1)$ where $M'_1$ is a square submatrix of $\mathcal{}B_n$ giving rise to a principal minor. Since $\lambda a^{(1)}_0\ldots a^{(1)}_n$ is a summand in $\det(M'_1)$, by inductive hypothesis $I=\{2,\ldots,m+1\}$ and 
      $\lambda=1$.

\end{proof}

\section{On the finiteness of the $p$--adic Jacobi--Perron algorithm} \label{sec:fin}

In \cite{MT}, the authors gave some results about the finiteness of the $p$-adic Jacobi--Perron algorithm. We recall these results below.

\begin{proposition} \label{prop:lin-dip}
If the $p$--adic Jacobi--Perron algorithm stops in a finite number of steps when processing the $m$--tuple $(\alpha^{(1)},\ldots , \alpha^{(m)}) \in \mathbb Q_p^m$, then $1,\alpha^{(1)},\ldots , \alpha^{(m)}$ are $\QQ$-linearly dependent. 
\end{proposition}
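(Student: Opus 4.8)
The plan is to read off what ``terminating in a finite number of steps'' forces and then to propagate a $\QQ$-linear relation backwards through the algorithm. Inspecting the iterative equations \eqref{eq:alg}, the only way the computation can fail to proceed past step $n$ is that the denominator $\alpha_n^{(m)}-a_n^{(m)}$ occurring in $\alpha_{n+1}^{(1)}$ vanishes (all other expressions, in particular $s(\cdot)$, are everywhere defined). Hence, letting $n$ be the step at which the algorithm stops, we have $\alpha_n^{(m)}=a_n^{(m)}=s(\alpha_n^{(m)})\in\mathcal Y\subseteq\ZZ[1/p]\subseteq\QQ$, while $\alpha_k^{(m)}-a_k^{(m)}\neq 0$ for every $k<n$. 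In particular the single identity $\alpha_n^{(m)}-a_n^{(m)}=0$ is a nontrivial $\QQ$-linear relation among $1,\alpha_n^{(1)},\ldots,\alpha_n^{(m)}$ (the coefficient of $\alpha_n^{(m)}$ being $1$). So it is enough to show that a nontrivial $\QQ$-linear relation among $1,\alpha_{k+1}^{(1)},\ldots,\alpha_{k+1}^{(m)}$ produces one among $1,\alpha_{k}^{(1)},\ldots,\alpha_{k}^{(m)}$, for $k=n-1,n-2,\ldots,0$, and then to specialize to $k=0$, where $\alpha_0^{(i)}=\alpha^{(i)}$.

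For the inductive step, suppose $c_0+\sum_{i=1}^m c_i\alpha_{k+1}^{(i)}=0$ with $(c_0,\ldots,c_m)\in\QQ^{m+1}\setminus\{0\}$. Substituting the defining formulas $\alpha_{k+1}^{(1)}=1/(\alpha_k^{(m)}-a_k^{(m)})$ and $\alpha_{k+1}^{(i)}=(\alpha_k^{(i-1)}-a_k^{(i-1)})/(\alpha_k^{(m)}-a_k^{(m)})$ from \eqref{eq:alg}, and multiplying through by $\alpha_k^{(m)}-a_k^{(m)}$ (legitimate, since $k<n$), the relation becomes
\[
c_0\bigl(\alpha_k^{(m)}-a_k^{(m)}\bigr)+c_1+\sum_{i=2}^m c_i\bigl(\alpha_k^{(i-1)}-a_k^{(i-1)}\bigr)=0 ,
\]
which, after expanding, is a $\QQ$-linear relation among $1,\alpha_k^{(1)},\ldots,\alpha_k^{(m)}$ in which the coefficient of $\alpha_k^{(m)}$ equals $c_0$, the coefficient of $\alpha_k^{(\ell)}$ equals $c_{\ell+1}$ for $\ell=1,\ldots,m-1$, and the constant term is $c_1-c_0 a_k^{(m)}-\sum_{i=2}^m c_i a_k^{(i-1)}\in\QQ$. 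If all of these vanished, then $c_0=c_2=\cdots=c_m=0$, and the constant term would reduce to $c_1=0$, contradicting $(c_0,\ldots,c_m)\neq 0$. Hence the new relation is again nontrivial, and iterating down to $k=0$ yields a nontrivial $\QQ$-linear relation among $1,\alpha^{(1)},\ldots,\alpha^{(m)}$, as desired.

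I do not expect a serious obstacle: the proof is a short downward induction using only the defining recursion \eqref{eq:alg}, with no appeal to convergents or to the auxiliary sequences. The two points needing a little care are (i) making precise that termination can only be caused by $\alpha_n^{(m)}-a_n^{(m)}=0$, so that one genuinely lands in $\mathcal Y\subseteq\QQ$, and (ii) the bookkeeping verifying that the back-substituted relation stays nontrivial, which is immediate from the description of its coefficients above. As an alternative one could argue ``all at once'' by eliminating the (a priori irrational) complete quotients $\alpha_n^{(1)},\ldots,\alpha_n^{(m-1)}$ from the $m$ identities $\sum_{j=1}^{m+1}\alpha_n^{(j)}V_{n-j}^{(i)}=0$ of Proposition \ref{prop:V}, using $\alpha_n^{(m)}\in\QQ$; but that route requires an $m\times m$ determinant manipulation, whereas the inductive version is cleaner.
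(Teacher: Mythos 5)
Your argument is correct. Note that the paper does not actually prove this proposition: it is recalled from the earlier work \cite{MT}, so there is no internal proof to compare against line by line. Your route is the natural one and it is sound: as you observe, with the algorithm written as in \eqref{eq:alg} the only way it can halt at step $n$ is $\alpha_n^{(m)}-a_n^{(m)}=0$, which places $\alpha_n^{(m)}=a_n^{(m)}\in\mathcal{Y}\subseteq\ZZ[1/p]\subseteq\QQ$ and gives a nontrivial rational relation at level $n$; your back-substitution step multiplies by the nonzero denominator $\alpha_k^{(m)}-a_k^{(m)}$ and the coefficient bookkeeping (the old $c_0$ reappearing as the coefficient of $\alpha_k^{(m)}$, the old $c_{\ell+1}$ as the coefficient of $\alpha_k^{(\ell)}$) correctly rules out degeneration, since all partial quotients lie in $\QQ$. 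It is worth noticing that your downward induction is essentially the transpose of the matrix mechanism the paper itself uses later: the terminal relation is the row vector $(0,\ldots,0,1,-a_n^{(m)})$ annihilating $(\alpha_n^{(1)},\ldots,\alpha_n^{(m)},1)^T$, and since $(\alpha_0^{(1)},\ldots,\alpha_0^{(m)},1)^T$ is proportional to $\mathcal{B}_{n-1}(\alpha_n^{(1)},\ldots,\alpha_n^{(m)},1)^T$ by \eqref{eq:alpha0}, multiplying that row by the unimodular matrix $\mathcal{B}_{n-1}^{-1}$ (det $\mathcal{B}_{n-1}=\pm1$) transports it in one stroke to a nontrivial rational relation among $1,\alpha_0^{(1)},\ldots,\alpha_0^{(m)}$; this is exactly the relationship encoded by the sequence $(S_n)$ and formulas \eqref{eq:uno} and \eqref{eq:traspostaB} in Section 3. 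The one-step-at-a-time version you wrote buys elementarity (no convergents needed), while the matrix version makes the nontriviality automatic from unimodularity; either is acceptable.
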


\begin{proposition} \label{prop:finite}
For an input $(\alpha_0^{(1)},\ldots , \alpha_0^{(m)})\in\QQ_p^m$,  the $p$--adic Jacobi--Perron algorithm terminates in a finite number of steps.
\end{proposition}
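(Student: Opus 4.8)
The plan is to recast finiteness as the eventual vanishing of the auxiliary sequences $V_n^{(i)}=A_n^{(i)}-\alpha_0^{(i)}A_n^{(m+1)}$ of Proposition~\ref{prop:V}, and then to force that vanishing by opposing the $p$--adic and the Euclidean size of $V_n^{(i)}$. First I would record that, since $|A_n^{(m+1)}|=\prod_{h=1}^n|a_h^{(1)}|>1$ gives $A_n^{(m+1)}\neq 0$ for $n\geq 1$, the algorithm halts at step $n$ exactly when $\bm{\alpha}_0$ coincides with its $n$--th convergent, that is, when $V_n^{(i)}=0$ for every $i$. Hence it suffices to produce some finite $n$ with $V_n^{(i)}=0$ for all $i$, and I will argue by contradiction: if the algorithm ran forever, then for every $n$ the quotients would be defined and $\bm{\alpha}_0$ would differ from $Q_n$, so some $V_n^{(i)}$ would be nonzero for every $n$.

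The decisive tool is the product formula over $\QQ$, and this is the point at which the arithmetic of the coordinates of $\bm{\alpha}_0$ enters. Writing them over a common denominator $v$ and clearing the (purely $p$--power) denominators of the convergents $A_n^{(i)}\in\ZZ[1/p]$ by a factor $p^{e_n}$, I obtain integers $\tilde V_n^{(i)}:=v\,p^{e_n}V_n^{(i)}\in\ZZ$; these are the auxiliary integer sequences alluded to in the abstract. When $V_n^{(i)}\neq 0$, $\tilde V_n^{(i)}$ is a nonzero integer, so $|\tilde V_n^{(i)}|_\infty\prod_q|\tilde V_n^{(i)}|_q=1$ forces $|\tilde V_n^{(i)}|_\infty\cdot|\tilde V_n^{(i)}|\geq 1$; the factors $p^{e_n}$ cancel between the Euclidean and the $p$--adic norm, leaving a clean lower bound $|V_n^{(i)}|_\infty\cdot|V_n^{(i)}|\geq c$ with $c>0$ depending only on $v$.

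I would then estimate the two factors in opposite directions. On the Euclidean side, Proposition~\ref{prop:boundA} yields $|V_n^{(i)}|_\infty\leq |A_n^{(i)}|_\infty+|\alpha_0^{(i)}|_\infty|A_n^{(m+1)}|_\infty< C\,p^{n+1}$. On the $p$--adic side, Proposition~\ref{prop:V} gives $|V_n^{(i)}|\to 0$, and this decay can be quantified from the relation $\sum_{j=1}^{m+1}\alpha_n^{(j)}V_{n-j}^{(i)}=0$ together with the strict domination $|\alpha_n^{(j)}|<|\alpha_n^{(1)}|$ for $j\geq 2$ coming from \eqref{eq:alpha-norme}: the ultrametric inequality contracts the norm by a fixed factor after each block of $m+1$ steps, with the convergence speed governed by $|A_n^{(m+1)}|=\prod_{h=1}^n|a_h^{(1)}|$. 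If this decay is sufficiently strong, the product $|V_n^{(i)}|_\infty\cdot|V_n^{(i)}|$ tends to $0$, contradicting the lower bound $c$ for large $n$; hence $V_n^{(i)}$ must vanish from some step on, and the algorithm is finite.

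The main obstacle is precisely this rate comparison. The Euclidean bound of Proposition~\ref{prop:boundA} grows like $p^{n+1}$, so I must show that $|V_n^{(i)}|$ decays strictly faster than $p^{-(n+1)}$, equivalently that the $p$--adic valuation of $V_n^{(i)}$ exceeds $n$ by an amount tending to infinity. Converting the qualitative limit of Proposition~\ref{prop:V} into such an explicit geometric rate—controlling simultaneously the contraction factors furnished by \eqref{eq:alpha-norme} and the product $\prod_{h=1}^n|a_h^{(1)}|$ that measures how fast the convergents reach $\bm{\alpha}_0$ $p$--adically—is the delicate step on which the whole argument rests.
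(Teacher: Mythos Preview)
First, note that the paper does not actually prove this proposition: it is recalled from \cite{MT} without argument, so there is no in--paper proof to compare against. (Incidentally, the hypothesis is meant to read $(\alpha_0^{(1)},\ldots,\alpha_0^{(m)})\in\QQ^m$, not $\QQ_p^m$; this is clear from the sentence following the propositions and from the later use of Proposition~\ref{prop:finite}. You implicitly read it that way when you put the $\alpha_0^{(i)}$ over a common denominator.)

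As for the strategy itself, the obstacle you isolate is not merely ``delicate'': with the estimates available in this paper it cannot be overcome for $m\ge 2$. From $\sum_{j=1}^{m+1}\alpha_n^{(j)}V_{n-j}^{(i)}=0$ and $|\alpha_n^{(j)}|<|\alpha_n^{(1)}|$ for $j\ge 2$ one extracts at best
\[
|V_{n}^{(i)}|\le \tfrac{1}{p}\,\max_{1\le j\le m}|V_{n-j}^{(i)}|,
\]
hence $|V_n^{(i)}|\le C\,p^{-\lfloor n/m\rfloor}$; this is exactly the mechanism behind Corollary~\ref{cor:ennesuemme} for the integer sequence $S_n$, and no faster uniform rate is available without further hypotheses. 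Combined with $|V_n^{(i)}|_\infty<C'p^{n+1}$ from Proposition~\ref{prop:boundA}, the product $|V_n^{(i)}|_\infty\cdot|V_n^{(i)}|$ is bounded only by a constant times $p^{\,n(1-1/m)}$, which diverges for every $m\ge 2$. So the product--formula squeeze does not force $V_n^{(i)}=0$.

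That this is a genuine limitation, and not just slack in the estimates, is confirmed later in the section: Theorem~\ref{teo:finitenessgen} needs the extra hypothesis \eqref{eq:condfin} (for $m=2$, essentially $v_p(a_n^{(1)})\le -2$ eventually) to make the analogous argument for $S_n$ succeed, and Section~\ref{sec:per} exhibits $\QQ$--linearly dependent inputs with $v_p(a_n^{(1)})=-1$ whose expansion is periodic, hence infinite. The proof in \cite{MT} for rational inputs must therefore use the rationality in a more direct way than a generic rate comparison between Euclidean growth and $p$--adic decay of the $V_n^{(i)}$.
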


Thus, a full characterization of the input vectors which lead to a finite Jacobi--Perron expansion is still missing in the $p$--adic case. 
On the other hand, in the real field it is known that the Jacobi--Perron algorithm stops in a finite number of steps if and only if $1,\alpha^{(1)},\ldots , \alpha^{(m)}$ are $\QQ$-linearly dependent for $m=2$, whereas this is not true for $m \geq 3$, see \cite[Theorem 44]{Sch1} and \cite{Dub, DubA}. Counterexamples in the latter case are provided by $m$-tuples of algebraic numbers belonging to a finite extension of $\QQ$ of degree $<m+1$ and giving rise to a periodic MCF.  This shows that the finiteness an the periodicity  of the Jacobi--Perron algorithm are in some way interrelated.

In this section we shall assume that $1,\alpha^{(1)},\ldots , \alpha^{(m)}$ are linearly dependent over $\QQ$, and associate to every linear dependence relation 
a sequence of integers $(S_n)_{n\geq 0}$, which will  be useful in the investigation of the finiteness of the $p$--adic Jacobi--Perron algorithm. In particular, in the case $m=2$, we shall provide a condition that must be satisfied by the partial quotients of an infinite MCF obtained by the $p$-adic Jacobi--Perron algorithm processing a couple $(\alpha, \beta)$, where $1, \alpha, \beta$ are $\QQ$-linearly dependent.
We shall show in next section that, unlike the real case, even for $m=2$ there exist some input vectors $\bm{\alpha}$ such that $1,\alpha^{(1)},\ldots , \alpha^{(m)}$ are $\QQ$-linearly dependent but their $p$-adic Jacobi--Perron expansion is periodic (and hence not finite). \\
Let us consider $\bm{\alpha}_0=(\alpha^{(1)}_0,\ldots , \alpha^{(m)}_0) \in \mathbb Q_p^m $ and assume that there is a linear dependence relation  
\begin{equation}\label{eq:lindeprel} x_1 \alpha_0^{(1)} + \ldots + x_m \alpha_0^{(m)} + x_{m+1} = 0\end{equation}
with $x_1,\ldots, x_{m+1} \in \mathbb Z$ coprime. Then we can associate to it the  sequence
\begin{equation} \label{eq:s} S_n = x_1 A_{n-1}^{(1)} + \ldots +x_m A_{n-1}^{(m)} + x_{m+1} A^{(m+1)}_{n-1}\end{equation}
for any $n \geq -m$, where $A_n^{(i)}$ are, as usual, the numerators and denominators of the convergents of the MCF of $\bm{\alpha}_0$ defined by \eqref{eq:nd-conv}. It is straightforward to see that the following identities hold:

\begin{align} & S_n \alpha_n^{(1)} + \ldots + S_{n-m+1}\alpha_n^{(m)} + S_{n-m} = 0,  \hbox{ for any $n \geq 0$};\label{eq:uno}\\
 & S_n = a_{n-1}^{(1)} S_{n-1} + \ldots + a_{n-1}^{(m)} S_{n-m} + S_{n-m-1}, \hbox{  for any $n \geq 1$};\label{eq:due} \\
 & S_n = (a_{n-1}^{(1)} - \alpha_{n-1}^{(1)}) S_{n-1} + \ldots + (a_{n-1}^{(m)} - \alpha_{n-1}^{(m)}) S_{n-m}, \hbox{  for any $n \geq 1$};\label{eq:saa}\\
 & S_n = x_1 V_{n-1}^{(1)} + \ldots + x_m V_{n-1}^{(m)}, \hbox{ for any $n \geq -m+1$}.\label{eq:quattro}\\
 &\label{eq:traspostaB}\begin{pmatrix} S_n\\ S_{n-1}\\
\vdots \\ S_{n-m} \end{pmatrix} = \mathcal{B}_{n-1}^T \begin{pmatrix} x_1\\ x_2\\\vdots \\ x_{m+1} \end{pmatrix}
\end{align}
where the superscript $T$ denotes transposition.

\begin{proposition} \label{prop:s}
Given the  sequence $(S_n)_{n \geq -m}$ defined by \eqref{eq:s}, we have that $S_n \in \mathbb Z$, for any $n \geq -m$ and the $\gcd$ of $S_n,\ldots, S_{n-m}$ is a power of $p$. Moreover,
\[|S_n| < \max_{1\leq i \leq m} \{|S_{n-i}|\}\]
so that if the MCF for $(\alpha_0^{(1)},\ldots,\alpha_0^{(m)}) $ is infinite, then  \[\lim_{n \rightarrow +\infty} S_n = 0 \hbox{ in } \QQ_p.\]
\end{proposition}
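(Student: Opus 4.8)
The plan is to prove the three assertions in turn, using the already-available identities \eqref{eq:uno}--\eqref{eq:traspostaB} together with three basic facts: the partial quotients lie in $\mathcal{Y}\subseteq\ZZ\left[\frac1p\right]$, one has $\alpha-s(\alpha)\in p\ZZ_p$ for every $\alpha\in\QQ_p$ (immediate from the definition of $s$), and $\det\mathcal B_n=\pm1$.

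\emph{Integrality.} First I would note that, by an immediate induction on the recursion \eqref{eq:nd-conv} (whose coefficients are the $a_n^{(j)}\in\ZZ\left[\frac1p\right]$ together with the constant $1$, and whose initial data $\delta_{ij}$ and $a_0^{(i)}$ lie in $\ZZ\left[\frac1p\right]$), every $A_n^{(i)}$ belongs to $\ZZ\left[\frac1p\right]$; hence by \eqref{eq:s} so does $S_n$. To see $S_n\in\ZZ_p$ I would use strong induction via \eqref{eq:saa}: the base values $S_{-m}=x_{m+1},\dots,S_0=x_1$ are integers (read off from \eqref{eq:s} and $A^{(i)}_{-j}=\delta_{ij}$), and for $n\geq 1$ each coefficient $a_{n-1}^{(i)}-\alpha_{n-1}^{(i)}=-(\alpha_{n-1}^{(i)}-s(\alpha_{n-1}^{(i)}))$ lies in $p\ZZ_p$, so $S_n$ is a $\ZZ_p$-linear combination of $S_{n-1},\dots,S_{n-m}\in\ZZ_p$. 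Since $\ZZ\left[\frac1p\right]\cap\ZZ_p=\ZZ$, this yields $S_n\in\ZZ$.

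\emph{The gcd.} For $n=0$, $\gcd(S_0,\dots,S_{-m})=\gcd(x_1,\dots,x_{m+1})=1$. For $n\geq 1$ I would use \eqref{eq:traspostaB}: because $\det\mathcal B_{n-1}=\pm1$ and the entries of $\mathcal B_{n-1}$ lie in $\ZZ\left[\frac1p\right]$, the adjugate formula shows $\mathcal B_{n-1}^{-1}$ has entries in $\ZZ\left[\frac1p\right]$ as well, so each $x_i$ is a $\ZZ\left[\frac1p\right]$-linear combination of $S_n,\dots,S_{n-m}$. If a prime $q\neq p$ divided all of $S_n,\dots,S_{n-m}$, then writing the coefficients as $b/p^{e}$ with $b\in\ZZ$ and clearing the common factor $p^{e}$ would give $p^{e}x_i\in q\ZZ$, hence $q\mid x_i$ for every $i$ (as $q\neq p$), contradicting $\gcd(x_1,\dots,x_{m+1})=1$. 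Moreover $(S_n,\dots,S_{n-m})$ is not the zero vector, since $\mathcal B_{n-1}^T$ is invertible and $(x_1,\dots,x_{m+1})\neq 0$. Therefore $\gcd(S_n,\dots,S_{n-m})$ is a (positive) power of $p$.

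\emph{The inequality and the limit.} From \eqref{eq:saa} and the ultrametric inequality, $|S_n|\leq\max_{1\leq i\leq m}|a_{n-1}^{(i)}-\alpha_{n-1}^{(i)}|\cdot|S_{n-i}|\leq p^{-1}\max_{1\leq i\leq m}|S_{n-i}|$, and the right-hand maximum is strictly positive: if $S_{n-1}=\dots=S_{n-m}=0$ then \eqref{eq:saa} forces $S_n=0$, hence $(S_n,\dots,S_{n-m})=0$, which as above is impossible. This proves $|S_n|<\max_{1\leq i\leq m}|S_{n-i}|$. When the MCF is infinite, the limit follows at once from \eqref{eq:quattro} and Proposition \ref{prop:V}(1): since the $x_i$ are integers, $|S_n|\leq\max_{1\leq i\leq m}|V_{n-1}^{(i)}|\to 0$ (alternatively, iterating the inequality over blocks of length $m$ gives $\max\{|S_{n+1}|,\dots,|S_{n+m}|\}\leq p^{-1}\max\{|S_n|,\dots,|S_{n-m+1}|\}$, hence geometric decay). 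The step requiring the most care is the gcd assertion, where one must check that inverting $\mathcal B_{n-1}$ does not leave $\ZZ\left[\frac1p\right]$ and that the only denominators appearing are powers of $p$, so that divisibility by primes $q\neq p$ passes from the $S$'s to the $x_i$'s.
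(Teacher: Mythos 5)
Your proof is correct and, for the main points, follows the paper's own route: integrality via \eqref{eq:saa} together with $v_p(a_{n-1}^{(i)}-\alpha_{n-1}^{(i)})>0$ and $\ZZ\left[\frac 1p\right]\cap\ZZ_p=\ZZ$, the strict inequality again from \eqref{eq:saa} and the ultrametric bound $|a-\alpha|\leq p^{-1}$, and the limit from \eqref{eq:quattro} and Proposition \ref{prop:V}. The only real divergence is the gcd step: the paper disposes of it by induction on \eqref{eq:due} (a prime $q\neq p$ dividing $S_n,\ldots,S_{n-m}$ propagates, after clearing powers of $p$, down to $S_0,\ldots,S_{-m}$, i.e.\ to $x_1,\ldots,x_{m+1}$), whereas you invert the unimodular matrix in \eqref{eq:traspostaB}; the two mechanisms are equivalent, your version being slightly more structural and also yielding for free that $(S_n,\ldots,S_{n-m})\neq 0$. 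That non-vanishing observation is a genuine plus: the paper's displayed strict inequality tacitly assumes $\max_i|S_{n-i}|>0$, and you supply the justification the paper omits.
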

\begin{proof}
By definition $S_n \in \mathbb Z\left[ \cfrac{1}{p} \right]$, for any $n \geq -m$, and $S_{-m+1}, \ldots S_0 \in \mathbb Z$. Then, using formula \eqref{eq:saa}, 
 and observing that $v_p(a_{n-1}^{(i)} - \alpha_{n-1}^{(i)}) > 0$, for $i = 1, \ldots, m$, where $v_p(\cdot)$ is the $p$-adic valuation, we get $S_n \in \mathbb Z$. The assertion about the $\gcd$ is easily proved by induction, using formula \eqref{eq:due}. 

Since $|a_n^{(i)} - \alpha_n^{(i)}| < 1$, from \eqref{eq:saa}, we have
\[|S_n| \leq \max_{1 \leq i \leq m} \{|a_n^{(i)} - \alpha_n^{(i)}| |S_{n-1}|\} < \max_{1 \leq j \leq m} \{|S_{n-i}|\}.\]
Finally, by Proposition \ref{prop:V} and formula \eqref{eq:quattro} 
we see that $\lim_{n \rightarrow +\infty} S_n = 0$ in $\mathbb Q_p$.
\end{proof}

An immediate consequence of Proposition \ref{prop:s} is the following
\begin{corollary}\label{cor:ennesuemme} For $n\geq 0$, write $n=qm+r$ with $q,r\in\ZZ$ and $0\leq r <m$; then $v_p(S_n)> q$. In particular $v_p(S_n)> \left[ \frac n m\right]$ for every $n\geq 0$.\end{corollary}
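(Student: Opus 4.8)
This follows from Proposition~\ref{prop:s} by iteration. The plan is first to rewrite the inequality $|S_n|<\max_{1\leq i\leq m}|S_{n-i}|$ of Proposition~\ref{prop:s} at the level of valuations: since every $S_n$ lies in $\ZZ$, the $p$--adic valuation $v_p(S_n)$ is a nonnegative integer, and the inequality is equivalent to
\[
v_p(S_n)\ \geq\ 1+\min_{1\leq i\leq m}v_p(S_{n-i})\qquad\text{for every }n\geq 1 .
\]
One then proves $v_p(S_n)>\left[\frac{n}{m}\right]$ for all $n\geq -m$ by strong induction on $n$; writing $n=qm+r$ with $0\leq r<m$ this gives $v_p(S_n)>q$, which is the corollary.

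For the base range $-m\leq n\leq 0$ one has $\left[\frac{n}{m}\right]\leq 0$, so the claim follows from $v_p(S_n)\geq 0$ together with a direct check at $n=0$, where $S_0=x_1$ (and, more generally, $S_{-j}=x_{j+1}$ for $0\leq j\leq m$). For the inductive step, fix $n\geq 1$ and observe that for each $i\in\{1,\dots,m\}$ the index $n-i$ lies in $\{-m,\dots,n-1\}$ and satisfies
\[
\left[\tfrac{n-i}{m}\right]\ \geq\ \left[\tfrac{n-m}{m}\right]\ =\ \left[\tfrac{n}{m}\right]-1 .
\]
By the inductive hypothesis, $v_p(S_{n-i})>\left[\frac{n}{m}\right]-1$, hence $v_p(S_{n-i})\geq\left[\frac{n}{m}\right]$ because valuations are integers; substituting into the displayed valuation inequality yields $v_p(S_n)\geq 1+\left[\frac{n}{m}\right]>\left[\frac{n}{m}\right]$, closing the induction.

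The main point that needs care is the arithmetic bookkeeping in the inductive step: as $i$ runs over $1,\dots,m$ the floor $\left[\frac{n-i}{m}\right]$ decreases by exactly one unit, attaining its minimum $\left[\frac{n}{m}\right]-1$ at $i=m$, and this single lost unit is recovered precisely by the ``$+1$'' coming from the \emph{strict} inequality of Proposition~\ref{prop:s}; a merely non-strict estimate $|S_n|\leq\max_{1\leq i\leq m}|S_{n-i}|$ would not suffice. This is also why the conclusion is naturally stated in the block form $n=qm+r$: the valuation is guaranteed to grow by a unit only over each window of $m$ consecutive indices, not at every single step. Consequently the one genuinely delicate spot is the seeding of the induction at the boundary indices near $n=0$, where the estimate must be reconciled with the arithmetic of the initial coefficients $x_1,\dots,x_{m+1}$.
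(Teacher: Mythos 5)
Your reduction of Proposition~\ref{prop:s} to the valuation inequality $v_p(S_n)\ \geq\ 1+\min_{1\leq i\leq m}v_p(S_{n-i})$ (valid for $n\geq 1$, since \eqref{eq:saa} is only available from $n=1$ on) and the bookkeeping in your inductive step are both correct, and this iteration is exactly what the paper has in mind when it calls the corollary an immediate consequence of Proposition~\ref{prop:s}. The genuine gap is the seeding of the induction at $n=0$: there you need $v_p(S_0)>\left[\tfrac 0m\right]=0$, i.e.\ $p\mid S_0$, and the ``direct check'' you invoke cannot be carried out. Indeed $S_0=x_1$, and the only hypothesis on $x_1,\ldots,x_{m+1}$ in \eqref{eq:lindeprel} is that they are coprime integers; nothing forces $p\mid x_1$ (for instance, for $m=2$ and $\alpha_0^{(2)}=\alpha_0^{(1)}+1$ one has $(x_1,x_2,x_3)=(1,-1,1)$, so $v_p(S_0)=0$). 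So the induction, as written, cannot start.

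What your argument actually delivers, with the legitimate base cases $v_p(S_j)\geq 0>-1=\left[\tfrac jm\right]$ for $-m\leq j\leq -1$, is $v_p(S_n)\geq \lceil n/m\rceil$ for all $n\geq 0$: every window of $m$ consecutive indices that reaches back to a nonpositive index only guarantees minimum valuation $0$, so one unit of valuation is gained per block of $m$ steps. This coincides with the corollary's bound $v_p(S_n)>q$ whenever $r\geq 1$, but at $r=0$ (in particular at $n=0$, and then at every $n=qm$) it yields only the non-strict bound $v_p(S_n)\geq q$. That missing unit at multiples of $m$ is precisely what does not follow from Proposition~\ref{prop:s} by this route, and no refinement of the inductive step recovers it; one must either weaken the conclusion to $v_p(S_n)\geq q$ when $m\mid n$ or supply an additional argument (or hypothesis) at those indices. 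You correctly flagged the boundary as the delicate spot, but flagging it is not the same as closing it, and as stated the proof is incomplete exactly there.
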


Proposition \ref{prop:s} and Corollary \ref{cor:ennesuemme} describe the behaviour of the sequence $(S_n)$ with respect to the $p$-adic norm. We study now its behaviour  with respect to the euclidean norm. We start by a general result.

\begin{proposition} \label{prop:T}
Let $(T_n)_{n \geq -m}$ be any sequence in $\RR$ satisfying
$$T_n = y_n^{(1)} T_{n-1} + \ldots + y_{n}^{(m)} T_{n-m} + T_{n-m-1}, \quad n \geq 1 $$
where $(y_n^{(1)})_{n \geq 1}, \ldots (y_n^{(m)})_{n \geq 1}$ are sequences of elements in $\mathcal{Y}$; then 
$$\lim_{n \rightarrow + \infty} \cfrac{T_n}{p^n} = 0$$
in $\RR$.
\end{proposition}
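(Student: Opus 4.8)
The plan is to dominate $(T_n)$ by the worst-case constant-coefficient linear recurrence obtained by replacing each $y_n^{(i)}$ with the extreme value $\frac p2$; recall that $\mathcal Y\subseteq\left(-\frac p2,\frac p2\right)$, so $|y_n^{(i)}|_\infty\le\frac p2$ for all $n\ge 1$ and $i=1,\dots,m$. The relevant ``characteristic polynomial'' is $f(x)=x^{m+1}-\frac p2\left(x^m+x^{m-1}+\cdots+x\right)-1$. Dividing by $x^{m+1}$ one sees that $f(x)=x^{m+1}\bigl(1-\phi(x)\bigr)$ where $\phi(x)=\frac p2\left(x^{-1}+\cdots+x^{-m}\right)+x^{-m-1}$ is strictly decreasing on $(0,+\infty)$, with $\phi(0^+)=+\infty$ and $\phi(+\infty)=0$; hence $f$ has a unique positive root $\rho$, one has $f>0$ on $(\rho,+\infty)$, and $\rho$ satisfies $\rho^{m+1}=\frac p2(\rho^m+\cdots+\rho)+1$. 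I would then prove by strong induction on $n$ that $|T_n|_\infty\le C\rho^n$ for all $n\ge -m$, where $C:=\max_{-m\le j\le 0}|T_j|_\infty\rho^{-j}$ handles the $m+1$ initial values; the inductive step for $n\ge1$ is just the triangle inequality $|T_n|_\infty\le\frac p2\sum_{i=1}^m|T_{n-i}|_\infty+|T_{n-m-1}|_\infty\le C\rho^{n-m-1}\left(\frac p2(\rho^m+\cdots+\rho)+1\right)$, which equals $C\rho^n$ by the defining relation of $\rho$.

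The crux of the argument — and the only point requiring an actual computation — is the inequality $\rho<p$; granting it, $\bigl|T_n/p^n\bigr|_\infty\le C(\rho/p)^n\to0$ as $n\to+\infty$, which is the assertion. Since $f>0$ exactly on $(\rho,+\infty)$, it suffices to check $f(p)>0$, equivalently $\phi(p)<1$. Clearing denominators, $\phi(p)<1$ amounts to $\tfrac12\bigl(p^{m+1}+p^m+\cdots+p^2\bigr)+1<p^{m+1}$, i.e. to $\sum_{j=2}^m p^j+2<p^{m+1}$, and this follows at once from the elementary geometric bound $\sum_{j=0}^m p^j=\frac{p^{m+1}-1}{p-1}<p^{m+1}$ (valid for every $p\ge2$), since then $\sum_{j=2}^m p^j+2=\sum_{j=0}^m p^j-(p-1)<p^{m+1}-(p-1)\le p^{m+1}$.

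I do not expect a genuine obstacle, but one subtlety is worth flagging: the bound $\rho<p$ is \emph{tight} as $m\to\infty$ (for instance, when $p=2$ one gets $\rho^{m+2}-2\rho^{m+1}+1=0$, so $\rho\to2$ from below), and it really uses the exact geometric sum rather than the cruder estimate $\rho^m+\cdots+\rho<\frac{\rho^{m+1}}{\rho-1}$, which would only give $\rho\le1+\frac p2$ — not $<p$ when $p=2$. So the one thing to be careful about is to compare with $p^{m+1}$ exactly (as above) and not through a convenient over-estimate. Finally, the proposition will be applied to the sequence $(S_n)$ of \eqref{eq:s}, which by \eqref{eq:due} satisfies precisely such a recurrence with $y_n^{(i)}=a_{n-1}^{(i)}=s(\alpha_{n-1}^{(i)})\in\mathcal Y$, yielding $S_n/p^n\to0$ in $\RR$.
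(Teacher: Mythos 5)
Your proof is correct, but it takes a different route from the paper's. The paper normalizes immediately, setting $u_n=\bigl|T_n/p^n\bigr|_\infty$, and uses the recurrence together with $|y_n^{(i)}|_\infty<\tfrac p2$ to get the one-step bound $u_n\le K_p\max\{u_{n-1},\ldots,u_{n-m-1}\}$ with $K_p=\tfrac1{p^{m+1}}+\tfrac12\sum_{k=0}^{m-1}p^{-k}<1$, and concludes by iterating this max-contraction; you instead dominate $(|T_n|_\infty)$ by the constant-coefficient recurrence with all coefficients $\tfrac p2$, identify its unique positive characteristic root $\rho$, prove $|T_n|_\infty\le C\rho^n$ by induction, and reduce everything to the single inequality $\rho<p$, i.e.\ $f(p)>0$. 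The two arguments hinge on literally the same arithmetic fact, since your $\phi(p)=\tfrac12\sum_{k=0}^{m-1}p^{-k}+p^{-m-1}$ coincides with the paper's $K_p$, so ``$\rho<p$'' and ``$K_p<1$'' are the same inequality in different clothing; your verification of it via the exact geometric sum (rather than a cruder over-estimate, which would fail at $p=2$) is sound, and your induction step and the treatment of the $m+1$ initial values are fine. What your packaging buys is a clean explicit geometric rate $\bigl|T_n/p^n\bigr|_\infty\le C(\rho/p)^n$, at the cost of introducing the Perron root $\rho$ and its monotonicity argument; the paper's normalization avoids any discussion of characteristic roots and is slightly shorter, though its iteration only yields decay like $K_p^{\lfloor n/(m+1)\rfloor}$ of the initial data (its stated exponent $n-2$ is optimistic), which is all that is needed for the limit.
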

\begin{proof} 

In the following $\lvert \cdot \rvert_\infty$ stands for the Euclidean norm. We have
\begin{align*}
    \left |\frac{T_n}{p^n} \right |_\infty
    &<\frac 1 2  \left |\frac{T_{n-1}}{p^{n-1}} \right |_\infty + \frac 1 {2p}  \left |\frac{T_{n-2}}{p^{n-2}} \right |_\infty + \ldots +  \frac 1 {2p^{m-1}}  \left |\frac{T_{n-m}}{p^{n-m}} \right |_\infty +  \frac 1 {p^{m+1}}  \left |\frac{T_{n-m-1}}{p^{n-m-1}} \right |_\infty\\
    &\leq K_p \max\left\{\left|\frac{T_{n-1}}{p^{n-1}} \right |_\infty, \left|\frac{T_{n-2}}{p^{n-2}} \right |_\infty, \ldots, \left|\frac{T_{n-m}}{p^{n-m}} \right |_\infty, \left|\frac{T_{n-m-1}}{p^{n-m-1}} \right |_\infty  \right\},
\end{align*}
where $K_p = \cfrac{1}{p^{m+1}} + \cfrac{1}{2} \sum_{k=0}^{m-1} \cfrac{1}{p^k} < 1$. Therefore 
$$ \left |\frac{T_n}{p^n} \right |_\infty< K_p^{n-2} \max\left\{\left|\frac{T_{m}}{p^{m}} \right |_\infty,\left|\frac{T_{m-1}}{p^{m-1}} \right |_\infty, \ldots, \left|\frac{T_{1}}{p} \right |_\infty, \left|T_{0} \right |_\infty  \right\}$$
and the claim follows.
\end{proof}

\begin{corollary} \label{cor:liminftyS}
For the sequence $(S_n)_{n \geq -m+1}$, we have
\[\lim_{n \rightarrow + \infty} \cfrac{S_n}{p^n} = 0\]
in $\mathbb R$.
\end{corollary}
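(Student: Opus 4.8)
The plan is to read Corollary \ref{cor:liminftyS} off directly from Proposition \ref{prop:T}. Recall from \eqref{eq:due} that for every $n\geq 1$ the sequence $(S_n)$ satisfies
$$S_n = a_{n-1}^{(1)} S_{n-1} + \ldots + a_{n-1}^{(m)} S_{n-m} + S_{n-m-1},$$
which is exactly the recurrence in the hypothesis of Proposition \ref{prop:T} once we set $y_n^{(i)} := a_{n-1}^{(i)}$ for $i=1,\ldots,m$ and $n\geq 1$. So the whole argument reduces to checking that $(S_n)$ and these coefficients fit the framework of that proposition.

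The only point that needs verification is that $y_n^{(i)}\in\mathcal{Y}$ for all $n\geq 1$. This is immediate from the first line of \eqref{eq:alg}: every partial quotient has the form $a_k^{(i)}=s(\alpha_k^{(i)})$, and the Browkin $s$-function takes values in $\mathcal{Y}$ by definition, so $y_n^{(i)}=a_{n-1}^{(i)}\in\mathcal{Y}$. Moreover, by Proposition \ref{prop:s} the $S_n$ are integers, hence $(S_n)_{n\geq -m}$ is a genuine real sequence; its initial data $S_{-m},\ldots,S_0$ play the role of $T_{-m},\ldots,T_0$. Applying Proposition \ref{prop:T} then gives $\lim_{n\to+\infty} S_n/p^n = 0$ in $\RR$, and passing to the subsequence indexed by $n\geq -m+1$ changes nothing, since the statement is about the tail.

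I do not expect a real obstacle here: all the analytic content — in particular the estimate $K_p<1$ governing the geometric decay of $|T_n/p^n|_\infty$ — is already contained in the proof of Proposition \ref{prop:T}. The only mild care needed is the bookkeeping of the index shift between the two recurrences (the coefficients of the $S_n$-recurrence carry index $n-1$). It is also worth noting that one should \emph{not} attempt to deduce the corollary from the cruder bound $|A_n^{(i)}|_\infty < p^{n+1}/2$ of Proposition \ref{prop:boundA} together with \eqref{eq:s}: combined with the coprime integers $x_1,\ldots,x_{m+1}$ this would only bound $|S_n/p^n|_\infty$, not force it to $0$, so the recursive estimate via Proposition \ref{prop:T} is the appropriate route.
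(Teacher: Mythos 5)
Your proof is correct and is exactly the paper's argument: the paper also deduces the corollary by observing that, via \eqref{eq:due}, the sequence $(S_n)$ satisfies the hypotheses of Proposition \ref{prop:T}. Your additional checks (that the coefficients $a_{n-1}^{(i)}$ lie in $\mathcal{Y}$ because they are values of the Browkin $s$-function, and the index bookkeeping) simply make explicit what the paper leaves implicit.
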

\begin{proof}
By formula \eqref{eq:due}, the sequence $(S_n)_{n \geq -m}$ satisfies the hypothesis of Proposition \ref{prop:T}.
\end{proof}
We shall use the properties stated above to establish some partial converse of Proposition \ref{prop:lin-dip}. The case $(\alpha_0^{(1)},\ldots , \alpha_0^{(m)})\in\QQ^m$ is dealt by Proposition \ref{prop:finite}, so that we can assume that  $(\alpha_0^{(1)},\ldots , \alpha_0^{(m)})\in\QQ_p^m\setminus \QQ^m$.  Notice that in the case $m=2$ under this hypothesis there can be only one linear dependence relation \eqref{eq:lindeprel}, so that the sequence $S_n$ depends only on the sequence  $(\alpha_0^{(1)},\alpha_0^{(2)})\in\QQ_p^2$.
\begin{proposition}\label{prop:denomlimgen} 
Assume that the sequence $\left( \cfrac{S_n}{p^n} \right )$ has bounded denominator, i.e. there exist $k\in \ZZ$ such that $v_p(S_n)\geq n+k$, for every $n$. Then the Jacobi--Perron algorithm stops in finitely many steps when processing the input $(\alpha_0^{(1)},\ldots , \alpha_0^{(m)})$.
\end{proposition}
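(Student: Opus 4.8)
The plan is to argue by contradiction: assuming the $p$--adic Jacobi--Perron algorithm does not terminate on the input $(\alpha_0^{(1)},\ldots,\alpha_0^{(m)})$, I will show that the associated sequence $(S_n)$ must vanish from some index on, and that this in turn forces the coprime integers $x_1,\ldots,x_{m+1}$ to be all zero, which is absurd. So suppose the algorithm runs indefinitely; then all the $a_n^{(i)}$, the $A_n^{(i)}$ and the matrices $\mathcal{B}_n$ are defined for every $n$.

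First I would combine the two size estimates available for $(S_n)$. On the one hand, by Proposition \ref{prop:s} every $S_n$ lies in $\ZZ$, and the standing hypothesis $v_p(S_n)\geq n+k$ means that $p^{n+k}$ divides $S_n$ for all $n\geq -k$; hence $S_n/p^{n+k}\in\ZZ$ for such $n$. On the other hand, Corollary \ref{cor:liminftyS} gives $S_n/p^n\to 0$ in $\RR$. Since $S_n/p^{n+k}=(S_n/p^n)\,p^{-k}$, we get
$$\left|\frac{S_n}{p^{n+k}}\right|_\infty = p^{-k}\left|\frac{S_n}{p^n}\right|_\infty \longrightarrow 0$$
in $\RR$. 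An integer sequence converging to $0$ is eventually $0$, so there is $N$ with $S_n=0$ for all $n\geq N$.

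Next I would feed this into the matrix identity \eqref{eq:traspostaB}. Choosing any $n\geq N+m$, we have $S_n=S_{n-1}=\cdots=S_{n-m}=0$, so \eqref{eq:traspostaB} reads
$$\mathcal{B}_{n-1}^T\begin{pmatrix} x_1\\ x_2\\ \vdots\\ x_{m+1}\end{pmatrix}=\mathbf 0.$$
But $\det\mathcal{B}_{n-1}=\pm 1$, so $\mathcal{B}_{n-1}^T$ is invertible and the column $(x_1,\ldots,x_{m+1})^T$ must be zero, contradicting the coprimality (in particular the non-triviality) of the $x_i$. This contradiction shows the algorithm stops in finitely many steps.

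The conceptual core — and the only step that is not purely formal — is the passage from the two one-sided bounds to the statement ``$S_n=0$ eventually'': it is the simultaneous control of $(S_n)$ with respect to the $p$--adic and the Euclidean absolute values that collapses it to zero, and the hypothesis must be used for \emph{every} large $n$, not merely asymptotically. Once that is in hand, the rest is bookkeeping with $\det\mathcal{B}_n=\pm 1$, and the non-termination assumption is exactly what guarantees the relevant $\mathcal{B}_{n-1}$ is defined and invertible. I do not anticipate a genuine obstacle beyond this point.
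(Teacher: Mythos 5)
Your proposal is correct and follows essentially the same route as the paper: clear the bounded denominator to obtain an integer sequence that tends to $0$ in the Euclidean norm by Corollary \ref{cor:liminftyS}, conclude $S_n=0$ eventually, and derive a contradiction with the nontriviality of $(x_1,\ldots,x_{m+1})$. The only (harmless) difference is the endgame: the paper gets the contradiction from \eqref{eq:uno}, while you use \eqref{eq:traspostaB} together with $\det\mathcal{B}_{n-1}=\pm 1$, which is an equally valid and if anything slightly more explicit way to rule out the vanishing of the $S_n$.
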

\begin{proof}
Assume that the Jacobi--Perron algorithm does not stop. Put  $z_n=p^k \cfrac{S_n}{p^n}$; then $z_n\in\ZZ $ and the sequence $(z_n)$ tends to $0$ in the euclidean norm, by Corollary \ref{cor:liminftyS}. It follows that $z_n$ (and hence $S_n$) is 0 for $n\gg 0$, and this is impossible by formula \eqref{eq:uno}. 
\end{proof}
The following theorem is the main result of this section.
To get an uniform notation,  we shall put $\alpha^{(m+1)}_n=a^{(m+1)}_n=1$ for every $n$.
\begin{theorem}\label{teo:finitenessgen}
Assume that $1, \alpha_0^{(1)},\ldots , \alpha_0^{(m)}$ are $\QQ$-linearly dependent and 
\begin{equation}\label{eq:condfin}  v_p(a^{(j)}_n)-v_p(a^{(1)}_n)\geq  j-1\quad\hbox{for $j=3,\ldots, m+1$ and any $n$ sufficiently large.} \end{equation}
 Then the Jacobi--Perron algorithm stops in finitely many steps when processing the input $(\alpha_0^{(1)},\ldots , \alpha_0^{(m)})$.\end{theorem}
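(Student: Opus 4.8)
If the algorithm already terminates there is nothing to prove, so I will assume it does not; then all complete and partial quotients are defined and the norm relations \eqref{eq:conv}, \eqref{eq:alpha-norme} are available for every $n\ge 1$. Fix a linear relation \eqref{eq:lindeprel} with coprime integer coefficients and let $(S_n)$ be the associated integer sequence \eqref{eq:s}. The plan is to show that $v_p(S_n)\ge n+k$ for some $k\in\ZZ$ and all $n$, and then to invoke Proposition \ref{prop:denomlimgen}, which will force the algorithm to stop --- contradicting the assumption.

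First I would pin down the relevant valuations. Write $u_n=v_p(a_n^{(1)})$; by \eqref{eq:alpha-norme} one has $v_p(\alpha_n^{(1)})=u_n$, by \eqref{eq:conv} $u_n\le -1$, and applying \eqref{eq:condfin} with $j=m+1$ (where $a_n^{(m+1)}=1$) one gets $u_n\le -m$ for all $n$ large. The core estimate I would then establish is the uniform lower bound
$$v_p(\alpha_n^{(i)})\ \ge\ u_n+(i-1),\qquad i=2,\dots,m+1,$$
for all $n$ large. The case $i=m+1$ is the inequality $0\ge u_n+m$ just obtained; the case $i=2$ comes from $|\alpha_n^{(2)}|<|\alpha_n^{(1)}|$ in \eqref{eq:alpha-norme}; and for $3\le i\le m$ I would split according to whether $|\alpha_n^{(i)}|\ge 1$ (then $v_p(a_n^{(i)})=v_p(\alpha_n^{(i)})$ by \eqref{eq:alpha-norme} and \eqref{eq:condfin} gives the bound directly) or $|\alpha_n^{(i)}|<1$ (then $v_p(\alpha_n^{(i)})\ge 1\ge u_n+(i-1)$, because $u_n\le -m\le 2-i$).

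With these bounds in hand, I would rewrite identity \eqref{eq:uno} as $\sum_{i=1}^{m+1}S_{n-i+1}\alpha_n^{(i)}=0$, solve for the $i=1$ term, take $p$--adic valuations of both sides, and substitute, obtaining
$$v_p(S_n)+u_n\ \ge\ \min_{2\le i\le m+1}\big\{v_p(S_{n-i+1})+u_n+(i-1)\big\},$$
hence, cancelling $u_n$ and putting $\ell=i-1$,
$$v_p(S_n)\ \ge\ \min_{1\le\ell\le m}\big\{v_p(S_{n-\ell})+\ell\big\}\qquad\text{for all }n\text{ large.}$$
Finally I would pass to $w_n=v_p(S_n)-n$ (with the convention $w_n=+\infty$ when $S_n=0$, a case excluded for $m$ consecutive indices by Proposition \ref{prop:s}), which turns the last inequality into $w_n\ge\min_{1\le\ell\le m}w_{n-\ell}$; a short combinatorial argument then shows that the windowed minima $M_n=\min\{w_{n-m+1},\dots,w_n\}$ are eventually non-decreasing, so $(w_n)$ is bounded below. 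Therefore $v_p(S_n)\ge n+k$ for a suitable $k\in\ZZ$ and all $n$, and Proposition \ref{prop:denomlimgen} yields the desired contradiction.

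The main obstacle is the uniform estimate $v_p(\alpha_n^{(i)})\ge u_n+(i-1)$: hypothesis \eqref{eq:condfin} is tailored precisely to the middle terms $S_{n-i+1}\alpha_n^{(i)}$ with $3\le i\le m$, and one has to notice that its instance $j=m+1$ supplies the bound $v_p(a_n^{(1)})\le -m$, which is exactly what makes the constant term $S_{n-m}$ (arising from $\alpha_n^{(m+1)}=1$) fit into the same recursion. Everything else is routine bookkeeping with the recurrences for $(S_n)$.
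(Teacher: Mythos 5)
Your proposal is correct and follows essentially the same route as the paper: your core estimate $v_p(\alpha_n^{(i)})\geq v_p(\alpha_n^{(1)})+(i-1)$ for $i=2,\ldots,m+1$ is exactly the paper's statement that $v_p(\gamma_n^{(j)})\geq 0$ for $\gamma_n^{(j)}=\alpha_n^{(j+1)}/(p^j\alpha_n^{(1)})$, and the subsequent recursion $v_p(S_n/p^n)\geq\min_{1\leq\ell\leq m}v_p(S_{n-\ell}/p^{n-\ell})$ together with Proposition \ref{prop:denomlimgen} is precisely how the paper concludes. The only difference is that you spell out the case analysis (including the use of the $j=m+1$ instance of \eqref{eq:condfin} to get $v_p(a_n^{(1)})\leq -m$) and the windowed-minimum argument, which the paper leaves implicit.
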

 Notice that the condition $v_p(a^{(2)}_n)-v_p(a^{(1)}_n)\geq  1$ is always true by conditions \eqref{eq:conv}.
\begin{proof}
By \eqref{eq:uno} we get
$$\frac{S_n}{p^n}=-\frac{S_{n-1}}{p^{n-1}}\gamma_n^{(1)} -\ldots -\frac{S_{n-m}}{p^{n-m}}\gamma_n^{(m)},$$
where for $j=1,\ldots, m$
$$\gamma_n^{(j)}=\frac {\alpha_n^{(j+1)}} {p^j\alpha_n^{(1)}}.$$
By equations \eqref{eq:conv},  \eqref{eq:alpha-norme} and hypotheses \eqref{eq:condfin} we have $v_p(\gamma_n^{(j)})\geq 0$   
for $n$ sufficiently large. Therefore $v_p\left (\frac{S_n}{p^n}\right ) \geq \min\left\{v_p\left (\frac{S_{n-1}}{p^{n-1}}\right ),\ldots,v_p\left (\frac{S_{n-m}}{p^{n-m}}\right )\right\}$ for $n$ sufficiently large, so that $v_p\left (\frac{S_n}{p^n}\right )\geq K$ for some $K\in\ZZ$. Then we conclude by Proposition \ref{prop:denomlimgen}.
\end{proof}

In the case  $m = 2$, Theorem \ref{teo:finitenessgen} assumes the following simple form.

\begin{corollary}\label{cor:finitenessdimdue} For $m = 2$, if $1, \alpha_0^{(1)}, \alpha_0^{(2)}$ are linearly dependent over $\QQ$ and the $p$-adic Jacobi--Perron algorithm does not stop then $v_p(a_n^{(1)})=-1$ for infinitely many $n\in\NN$.
\end{corollary}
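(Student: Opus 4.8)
The plan is to obtain this statement as a direct specialization of Theorem \ref{teo:finitenessgen} to the case $m=2$, followed by a contrapositive and a comparison with the normalization conditions \eqref{eq:conv}.

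First I would spell out what hypothesis \eqref{eq:condfin} becomes when $m=2$. There the index $j$ ranges over $j=3,\ldots,m+1$, which for $m=2$ reduces to the single value $j=3=m+1$. By the uniform-notation convention $\alpha^{(m+1)}_n=a^{(m+1)}_n=1$ adopted just before the theorem, we have $a^{(3)}_n=1$, hence $v_p(a^{(3)}_n)=0$. Thus \eqref{eq:condfin} collapses to $-v_p(a^{(1)}_n)\geq 2$, that is, $v_p(a^{(1)}_n)\leq -2$, for all $n$ sufficiently large. I would also recall, as already noted after Theorem \ref{teo:finitenessgen}, that the $j=2$ instance is automatic from \eqref{eq:conv}, so no further conditions are imposed.

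Next I would invoke Theorem \ref{teo:finitenessgen}: since $1,\alpha_0^{(1)},\alpha_0^{(2)}$ are assumed $\QQ$-linearly dependent, if $v_p(a^{(1)}_n)\leq -2$ held for all sufficiently large $n$, then the $p$-adic Jacobi--Perron algorithm would terminate in finitely many steps. Taking the contrapositive: if the algorithm does not stop, then it is not the case that $v_p(a^{(1)}_n)\leq -2$ eventually, i.e.\ there are infinitely many $n$ with $v_p(a^{(1)}_n)\geq -1$. Finally I would intersect this with the constraint $|a^{(1)}_n|>1$ from \eqref{eq:conv}, which forces $v_p(a^{(1)}_n)<0$, hence $v_p(a^{(1)}_n)\leq -1$, for every $n\geq 1$. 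Combining the two inequalities gives $v_p(a^{(1)}_n)=-1$ for infinitely many $n$, which is the assertion.

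Since this is a pure corollary of the already established Theorem \ref{teo:finitenessgen}, I do not expect any genuine obstacle. The only points requiring a little care are correctly unwinding the convention $a^{(m+1)}_n=1$ so that the $j=3$ instance of \eqref{eq:condfin} becomes a statement about $v_p(a^{(1)}_n)$ alone, and handling the logical negation correctly (the failure of ``$v_p(a^{(1)}_n)\leq -2$ for all large $n$'' yields ``$v_p(a^{(1)}_n)\geq -1$ for infinitely many $n$'', not merely for one $n$).
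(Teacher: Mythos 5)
Your proposal is correct and is exactly the argument the paper intends: the corollary is the $m=2$ specialization of Theorem \ref{teo:finitenessgen}, where the convention $a^{(3)}_n=1$ turns \eqref{eq:condfin} into $v_p(a^{(1)}_n)\leq -2$ for large $n$, and the contrapositive combined with $|a^{(1)}_n|>1$ from \eqref{eq:conv} forces $v_p(a^{(1)}_n)=-1$ infinitely often. Your care with the negation (infinitely many $n$, not just one) and with the $j=2$ case being automatic matches the paper's remarks.
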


In the next section we shall present some examples where the hypotheses of Corollary \ref{cor:finitenessdimdue} are satisfied.

\begin{remark}
In the classical real case, for $m=2$, it is possible to prove that the Jacobi--Perron algorithm detects rational dependence because the sequences $(V_n^{(1)})$ and $(V_n^{(2)})$ are bounded with respect to the euclidean norm. In fact, this implies that the set of triples $(S_n, S_{n-1}, S_{n-2})$ is finite and consequently the corresponding MCF is finite or periodic. Moreover it is possible to show that a periodic expansion can not occur and consequently the Jacobi--Perron algorithm stops when processes two real numbers $\alpha, \beta$ such that $1, \alpha, \beta$ are $\QQ$-linearly dependent, see \cite{Sch1} for details. 
In the $p$-adic case, the sequences $(V_n^{(i)})$ are  bounded (because they approach zero in $\mathbb Q_p$, see Proposition \ref{prop:V}); but the argument above does not apply, because the $p$-adic norm is non-archimedean. However, considering that $v_p(S_n) > \frac{n}{2}$ by Corollary \ref{cor:ennesuemme}, it could be interesting to focus on the sequence of integers $\left(\cfrac{S_n}{p^{n/2}}\right)$. When this sequence is bounded with respect to the euclidean norm, it is possible to argue  similarly to the real case and deduce the finiteness of the $p$-adic Jacobi--Perron algorithm on the given input.
\end{remark}

\section{On the characteristic polynomial of periodic multidimensional continued fractions} \label{sec:per}

The classical Jacobi--Perron algorithm was introduced over the real numbers with the aim of providing periodic representations for algebraic irrationalities. However, the problem regarding the periodicity of MCFs is still open, since it is not known if every algebraic irrational of degree $m + 1$ belongs to a real input vector of lenght $m$ for which the Jacobi--Perron algorithm is eventually periodic. On the contrary, periodic MCFs have been fully studied over the real numbers. Indeed, it is known that a periodic MCF represents real numbers belonging to an algebraic number field of degree less or equal than $m + 1$, see \cite{Ber2} for a survey on this topic. Moreover, for $m = 2$, Coleman \cite{Col} gave also a criterion for establishing when the periodic MCF converges to cubic irrationalities.
In this section, we start the study of the periodicity of MCFs over $\mathbb Q_p$. In particular, we shall see that, analogously to the real case,  a periodic $p$-adic $m$-dimesional MCF represents algebraic irrationalities of degree less or equal than $m + 1$.

Let us consider a purely periodic MCF of period $N$:

\begin{equation} \label{eq:MCF-period} (\alpha_0^{(1)}, \ldots, \alpha_0^{(m)}) = \left[\left(\overline{a_0^{(1)}, \ldots, a_{N-1}^{(1)}}\right), \ldots, \left(\overline{a_0^{(m)}, \ldots, a_{N-1}^{(m)}}\right)\right],\end{equation}
i.e., $a_{k+N}^{(i)}=a^{(i)}_k$ for every $k\in \NN$ and $i = 1, \ldots, m$. By \eqref{eq:MCF}, we also have $\alpha_{k+N}^{(i)}=\alpha^{(i)}_k$ for every $k\in \NN$ and $i = 1, \ldots, m$, from which, follows
\begin{equation}\label{eq:periouno} \alpha^{(i)}_0=\frac {\alpha^{(1)}_0A^{(i)}_{N-1}+\ldots + \alpha^{(m)}_0A^{(i)}_{N-m}+A^{(i)}_{N-m-1}}{\alpha^{(1)}_0A^{(m+1)}_{n-1}+\ldots + \alpha^{(m)}_0A^{(m+1)}_{n-m}+A^{(m+1)}_{N-m-1}}\end{equation}
using \eqref{eq:alpha0}.
We define the matrix
$$\mathcal{M} := \mathcal{B}_{N-1}=\prod_{j=0}^{N-1}\mathcal{A}_j=\begin{pmatrix} {A^{(1)}_{N-1}} &{A^{(1)}_{N-2}} &\ldots & {A^{(1)}_{N-m-1}}\\
 {A^{(2)}_{N-1}} &{A^{(2)}_{N-2}} &\ldots & {A^{(2)}_{N-m-1}}\\
  \vdots &\vdots&\vdots& \vdots\\
 {A^{(m+1)}_{N-1}} &{A^{(m+1)}_{N-2}} &\ldots & {A^{(m+1)}_{N-m-1}}\end{pmatrix}$$
whose characteristic polynomial $P(X)$ will be also called the characteristic polynomial of the periodic MCF \eqref{eq:MCF-period}.
From equation \eqref{eq:periouno}, we have
\[\mathcal{M}\begin{pmatrix}\alpha^{(1)}_0\\ \vdots\\ \alpha^{(m)}_0\\ 1 \end{pmatrix} = 
 \left (
 \alpha^{(1)}_0 A^{(m+1)}_{N-1} +\alpha^{(2)}_0 A^{(m+1)}_{N-2} +\ldots + \alpha^{(m)}_0 A^{(m+1)}_{N-m}+A^{(m+1)}_{N-m-1} \right )\begin{pmatrix}\alpha^{(1)}_0\\ \vdots\\ \alpha^{(m)}_0\\ 1 \end{pmatrix}.\]
Moreover, by Proposition \ref{prop:sum-prod} we know that $\sum_{i=1}^{m+1}\alpha^{(i)}_N A^{(m+1)}_{N-i} = \alpha_1^{(1)} \cdots \alpha_N^{(1)}$ and, since $\alpha_0^{(1)} = \alpha_N^{(1)}$, we have 
\[\mathcal{M}\begin{pmatrix}\alpha^{(1)}_0\\ \vdots\\ \alpha^{(m)}_0\\ 1 \end{pmatrix} = \alpha^{(1)}_0\ldots \alpha^{(1)}_{N-1} \begin{pmatrix}\alpha^{(1)}\\ \vdots\\ \alpha^{(m)}\\ 1 \end{pmatrix}.\] 
Therefore $\mu :=\alpha^{(1)}_0\ldots \alpha^{(1)}_{N-1}$ is an eigenvalue of $\mathcal{M}$ and a root of the characteristic polynomial $P(X)$.
In the next theorems, we shall see that $\mu$ is the $p$-adic dominant eigenvalue, that is the root greatest in $p$-adic norm of $P(X)$ and that the limits of the periodic MCF \eqref{eq:MCF-period} are strictly related to $\mu$. Note that it is not a loss of generality to consider purely periodic MCFs, since the algebraic properties of the complete quotients of a  MCF coincide with those of the input vector.

\begin{theorem} \label{thm:main}
Given the purely periodic MCF 
\[(\alpha_0^{(1)}, \ldots, \alpha_0^{(m)}) = \left[\left(\overline{a_0^{(1)}, \ldots, a_{N-1}^{(1)}}\right), \ldots, \left(\overline{a_0^{(m)}, \ldots, a_{N-1}^{(m)}}\right)\right]\]
and its characteristic polynomial $P(X)$, then $\mu = \alpha^{(1)}_0\ldots \alpha^{(1)}_{N-1}$ is the greatest root in $p$-adic norm.
\end{theorem}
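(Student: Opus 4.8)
The plan is to show that no root of $P(X)$ exceeds $\mu$ in $p$-adic norm; since $\mu$ has already been exhibited as a root of $P(X)$ (with eigenvector $(\alpha_0^{(1)},\ldots,\alpha_0^{(m)},1)^{T}$), this is exactly the assertion. Because $P(X)=\det(XI-\mathcal M)$ splits over $\overline{\QQ_p}$ into linear factors indexed by the eigenvalues of $\mathcal M$, it suffices to prove that every eigenvalue $\lambda\in\overline{\QQ_p}$ of $\mathcal M$ satisfies $|\lambda|\le|\mu|$, where from now on $|\cdot|$ denotes the unique extension of the $p$-adic absolute value to $\overline{\QQ_p}$. The key device is to pass to powers. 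By the periodicity of the partial quotients one has $\mathcal A_{j+N}=\mathcal A_j$ for all $j$, hence
\[\mathcal M^{k}=\mathcal B_{kN-1}=\mathcal A_0\mathcal A_1\cdots\mathcal A_{kN-1}\qquad(k\ge1),\]
and $\lambda^{k}$ is an eigenvalue of $\mathcal M^{k}$. I would then equip $\overline{\QQ_p}^{\,m+1}$ with the sup-norm $\|v\|=\max_i|v_i|$: the associated operator norm is submultiplicative, equals the largest $p$-adic norm of a matrix entry, and dominates the $p$-adic norm of every eigenvalue (immediately, since $Bv=\lambda v$ with $v\ne0$ gives $|\lambda|\,\|v\|=\|Bv\|\le\|B\|\,\|v\|$). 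Therefore $|\lambda|^{k}\le\|\mathcal M^{k}\|\le\prod_{j=0}^{kN-1}\|\mathcal A_j\|$, and everything reduces to estimating $\|\mathcal A_j\|$.

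From the shape \eqref{eq:matriciA} of $\mathcal A_j$ its entries are $a_j^{(1)},\ldots,a_j^{(m)}$ and $1$, so $\|\mathcal A_j\|=\max\{|a_j^{(1)}|,\ldots,|a_j^{(m)}|,1\}$. For $j\ge1$ this equals $|a_j^{(1)}|$ by \eqref{eq:conv}, and the same holds for $j=0$ because $a_0^{(i)}=a_N^{(i)}$ by periodicity and $N\ge1$. Telescoping over a whole number of periods,
\[\prod_{j=0}^{kN-1}\|\mathcal A_j\|=\prod_{j=0}^{kN-1}|a_j^{(1)}|=\Bigl(\prod_{j=0}^{N-1}|a_j^{(1)}|\Bigr)^{k}.\]
Finally, by \eqref{eq:alpha-norme} one has $|a_j^{(1)}|=|\alpha_j^{(1)}|$ for $j\ge1$, while $|a_0^{(1)}|=|a_N^{(1)}|=|\alpha_N^{(1)}|=|\alpha_0^{(1)}|$; hence $\prod_{j=0}^{N-1}|a_j^{(1)}|=\prod_{j=0}^{N-1}|\alpha_j^{(1)}|=|\alpha_0^{(1)}\cdots\alpha_{N-1}^{(1)}|=|\mu|$. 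Combining the displays, $\|\mathcal M^{k}\|\le|\mu|^{k}$, so $|\lambda|^{k}\le|\mu|^{k}$ for all $k$, and taking $k$-th roots gives $|\lambda|\le|\mu|$. Since $\mu$ is itself a root of $P(X)$, it is a root of greatest $p$-adic norm, as claimed.

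The computation above is short; the step where care is needed is precisely the collapse of the telescoping product to $|\mu|^{k}$ — that is, the treatment of the index $j=0$ (handled via pure periodicity) and the identification of $\prod_{j=0}^{N-1}|a_j^{(1)}|$ with $|\mu|$ through the norm relations \eqref{eq:alpha-norme} between partial and complete quotients. Conceptually, the essential observation is that the non-archimedean submultiplicativity of the sup operator norm supplies, for free, the control that a Perron--Frobenius type argument provides in the classical real case, so no positivity or irreducibility hypothesis on $\mathcal M$ is needed. A slightly more computational alternative I would keep in reserve: write the entries of $\mathcal M^{k}=\mathcal B_{kN-1}$ as $A_{kN-j}^{(i)}=V_{kN-j}^{(i)}+\alpha_0^{(i)}A_{kN-j}^{(m+1)}$ and use $|A_n^{(m+1)}|=\prod_{h=1}^{n}|a_h^{(1)}|$ together with $V_n^{(i)}\to0$ from Proposition \ref{prop:V}; this shows directly that $\|\mathcal M^{k}\|\le C|\mu|^{k}$ for a constant $C$ independent of $k$, which after taking $k$-th roots yields the same conclusion $|\lambda|\le|\mu|$.
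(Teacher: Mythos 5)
Your norm estimate is correct as far as it goes: over $\overline{\QQ_p}$ the sup-norm operator norm of a matrix is the maximum norm of its entries and bounds every eigenvalue, $\|\mathcal{A}_j\|=|a_j^{(1)}|$ (also at $j=0$, since pure periodicity gives $a_0^{(i)}=a_N^{(i)}$ and \eqref{eq:conv} applies at index $N$), and $|a_j^{(1)}|=|\alpha_j^{(1)}|$ identifies the product over one period with $|\mu|$; hence $|\lambda|\le\|\mathcal{M}\|\le|\mu|$ for every root $\lambda$ of $P(X)$ (the passage to powers $\mathcal{M}^k$ is not even needed, since the one-period bound is already exact). This is a genuinely different and shorter route than the paper's, which rescales to $\mathcal{M}'=p^k\mathcal{M}$, computes $\mathcal{M}'\equiv \mathrm{diag}(\tilde a_0^{(1)}\cdots\tilde a_{N-1}^{(1)},0,\ldots,0)\pmod p$, and reads off the Newton polygon of $P(X)$.

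However, there is a real gap relative to what the theorem is intended to assert and how it is used afterwards. You prove only that no root exceeds $\mu$ in $p$-adic norm; you do not prove that $\mu$ is \emph{the} dominant root, i.e.\ the unique root of maximal norm and in particular a simple root of $P(X)$. The Newton-polygon proof yields exactly this extra information: $v_p(\gamma_m)=-k$ while $v_p(\gamma_i)>-k(m+1-i)$ for $i<m$ (this strict inequality comes from the mod-$p$ congruence above), so the last side of the polygon has slope $k$ and horizontal length one, and there is exactly one root of norm $p^{k}=|\mu|$. That simplicity is what the paper invokes later: the $1$-dimensionality of the nullspace of $\mathcal{B}_{N-1}-\mu I_{m+1}$ in the proof of Theorem \ref{thm:munoraz}, and the strict separation in Theorem \ref{thm:normm1}. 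Your operator-norm bound translates, via the elementary symmetric functions, only into $v_p(\gamma_i)\ge -k(m+1-i)$, i.e.\ the Newton polygon lies on or above the line of slope $k$ through $(m+1,0)$, which cannot exclude a second root (or a repeated $\mu$) of norm $|\mu|$; the same limitation affects your fallback argument via $V_n^{(i)}\to 0$. To close the gap you would need an additional input, for instance the mod-$p$ computation of $\mathcal{M}'$ (equivalently, that the sums of principal minors of order $\ge 2$ of $\mathcal{M}$ have valuation strictly larger than the corresponding multiple of $-k$), which your argument never uses.
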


\begin{proof}

We consider $a_n^{(1)} = \cfrac{\tilde a_n^{(1)}}{p^{k_n}}$, for any $n \geq 0$, where $k_n \geq 0$ ($k_n > 0$, for $n > 0$). We define the quantity $k = k_0 + \ldots + k_{N-1}$ and the matrix
\[ \mathcal{M}':=p^k\mathcal{M}=\mathcal{A}'_0\ldots \mathcal{A}'_{N-1} \]
where
 $$\mathcal{A}'_i=p^{k_i}\mathcal{A}_i=\begin{pmatrix} {\tilde a^{(1)}_{i}} &p^{k_i} &0 &\ldots & 0\\
 p^{k_i}{a^{(2)}_{i}} &0 &p^{k_i} &\ldots & 0\\
  \vdots &\vdots&\vdots& \vdots&\vdots \\
  p^{k_i}a^{(m)}_{i} &0 &0& \ldots & p^{k_i}\\
 p^{k_i} &0 &0  &\ldots & 0\end{pmatrix}\equiv \begin{pmatrix} {\tilde a^{(1)}_{i}} &0 &0 &\ldots & 0\\
 0 &0 &0 &\ldots & 0\\
  \vdots &\vdots&\vdots& \vdots&\vdots \\
  0 &0 &0& \ldots & 0\\
 0 &0 &0  &\ldots & 0\end{pmatrix}\pmod p.$$
Therefore
\begin{equation}\label{eq:emmeprimo} \mathcal{M}'\equiv \begin{pmatrix} {\tilde a^{(1)}_{0}\ldots \tilde a^{(1)}_{N-1}} &0 &0 &\ldots & 0\\
 0 &0 &0 &\ldots & 0\\
  \vdots &\vdots&\vdots& \vdots&\vdots \\
  0 &0 &0& \ldots & 0\\
 0 &0 &0  &\ldots & 0\end{pmatrix}\pmod p.\end{equation}
Let $Q(X)$ be the characteristic polynomial of $\mathcal{M'}$. Then, $\lambda$ is an eigenvalue of $\mathcal{M}$ if and only if  $p^k\lambda $ is an eigenvalue of $\mathcal{M'}$. If $\lambda_1, \ldots, \lambda_{m+1}$ are the eigenvalues of $\mathcal M$, then
\begin{equation*} Q(X) = \prod_{i=1}^{m+1}(X-p^k\lambda_i) = p^{k(m+1)}\prod_{i=1}^{m+1} \left (\frac x {p^{k}}-\lambda_i\right ) = p^{k(m+1)} P\left ( \frac X{p^{k}}\right)
\end{equation*} 
so that
$$P(X)=\frac 1 {p^{k(m+1)}}Q(p^kX).$$
From \eqref{eq:emmeprimo} we have
$$Q(X) \equiv X^m(X-\tilde a_0^{(1)}\ldots \tilde a_{N-1}^{(1)})\pmod p.$$
Thus
$$Q(X)=X^{m+1}+\delta_mX^m+\ldots +\delta_0$$
with
$$\delta_m\equiv \tilde a_0^{(1)}\ldots \tilde a_{N-1}^{(1)}\pmod p, \quad \delta_i\equiv 0\pmod p\hbox{ for } i=0,\ldots, m-1, \quad \delta_0=\pm p^{k(m+1)}.$$
It follows\\
\begin{align*}
         P_\mu(X) &= \frac 1 {p^{k(m+1)}}Q(p^kX)\\
         &= \frac 1 {p^{k(m+1)}}(p^{k(m+1)}X^{m+1}+\delta_mp^{km}X^m+\ldots +\delta_ip^{ki}X^i+\ldots+\delta_0)\\
         &= X^{(m+1)}+\frac {\delta_m}{p^k}X^m+\ldots +\frac {\delta_i}{p^{k(m+1-i)}} X^i+\ldots \pm 1\\
         &=X^{m+1}+\gamma_mX^m+\ldots +\gamma_0,
\end{align*}
where 
$$\gamma_i=\frac {\delta_i}{ p^{k(m+1-i)}}\quad\hbox{ for } i=0,\ldots, m,\quad (\gamma_0=\pm 1).$$
Now, we put $\mu_i=v_p(\delta_i)$for $i=1,\ldots, m$ and observe that
$$\mu_m=0,\quad \mu_i>0\hbox{ for } i=1,\ldots, m-1, \quad \mu_0=k(m+1).$$
We can see that
\begin{align*}
         v_p(\gamma_m)& =v_p(a_0^{(1)}\ldots a_{N-1}^{(1)}) =\sum_{i=0}^{N-1}v_p(a_{i}^{(1)})=-k\\
         v_p(\gamma_i) &=v_p(\delta_i)-k(m+1-i)\\
         &=\mu_i+ik-(m+1)k\quad\hbox{for } i=0,\ldots, m.
\end{align*}
Now we want to study the Newton polygon (see \cite{Gou}) of $P(X)$ for proving that $\mu$ is the root greatest in $p$-adic norm.

The line, in the real plane, passing through the points $(i,v_p(\gamma_i))$ and $(m+1,0)$ has equation
\begin{equation} \label{eq:line} y=\frac {v_p(\gamma_i)}{m+1-i}(-x+m+1), \end{equation}
for any $i = 1, \ldots, m-1$. We will denote with $s_i$ the slope of this line.
From the fact that
$$v_p(\gamma_i)=\mu_i-k(m+1-i)\hbox{ and } \mu_i=v_p(\delta_i)>0,$$
we get
$$\frac {v_p(\gamma_i)}{m+1-i}=\frac {\mu_i}{m+1-i}-k>-k,$$
i.e., the point in the real plane with coordinates $(m,v_p(\gamma_m))=(m,-k)$ lies strictly under the line \eqref{eq:line}, for any $i = 1, \ldots, m-1$.

Thus, the Newton polygon associated to the polynomial $P(X)$ has slopes $(s_1, \ldots, s_m)$, which is a strictly increasing sequence, where the last slope $s_m$ is equal to $k$.
Hence, the claim of the theorem follows from \cite[Theorem 6.4.7]{Gou} and the fact that the sequence $(s_1,\ldots, s_m)$ of slopes is strictly increasing.

\end{proof}

\begin{theorem} \label{thm:munoraz}
Given the purely periodic MCF 
\[(\alpha_0^{(1)}, \ldots, \alpha_0^{(m)}) = \left[\left(\overline{a_0^{(1)}, \ldots, a_{N-1}^{(1)}}\right), \ldots, \left(\overline{a_0^{(m)}, \ldots, a_{N-1}^{(m)}}\right)\right]\]
and its characteristic polynomial $P(X)$, then
\begin{itemize}
\item[a)] $\QQ(\mu)=\QQ(\alpha^{(1)}_0,\ldots,\alpha^{(m)}_0)$
\item[b)] $\mu\not\in\QQ$
\end{itemize}  
where $\mu$ is the greatest root in $p$-adic norm of $P(X)$.
\end{theorem}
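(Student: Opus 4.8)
The plan is to derive both statements from the structure already established in Theorem \ref{thm:main}, namely that $\mu=\alpha_0^{(1)}\cdots\alpha_{N-1}^{(1)}$ is the unique eigenvalue of $\mathcal M$ of strictly largest $p$--adic norm, together with the relation $\mathcal M\,(\alpha_0^{(1)},\ldots,\alpha_0^{(m)},1)^T=\mu\,(\alpha_0^{(1)},\ldots,\alpha_0^{(m)},1)^T$ coming from \eqref{eq:periouno} and Proposition \ref{prop:sum-prod}. The first observation for part a) is that $(\alpha_0^{(1)},\ldots,\alpha_0^{(m)},1)^T$ is an eigenvector of $\mathcal M\in M_{m+1}(\QQ)$ for the eigenvalue $\mu$, and since $\mu$ is the dominant $p$--adic root it has multiplicity one (the slopes of the Newton polygon are strictly increasing by Theorem \ref{thm:main}, so $\mu$ is a simple root of $P(X)$); hence the corresponding eigenspace is one--dimensional and, being the kernel of $\mathcal M-\mu I$ with entries in $\QQ(\mu)$, is spanned by a vector with coordinates in $\QQ(\mu)$. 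Normalizing so that the last coordinate is $1$, uniqueness of the eigenvector forces $\alpha_0^{(i)}\in\QQ(\mu)$ for all $i$, giving $\QQ(\alpha_0^{(1)},\ldots,\alpha_0^{(m)})\subseteq\QQ(\mu)$.

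For the reverse inclusion I would use $\mu=\alpha_0^{(1)}\cdots\alpha_{N-1}^{(1)}$ directly: each complete quotient $\alpha_k^{(1)}$ is obtained from $\alpha_0^{(1)},\ldots,\alpha_0^{(m)}$ by finitely many applications of the algorithm \eqref{eq:alg}, whose operations are rational (the partial quotients $a_n^{(i)}=s(\alpha_n^{(i)})$ lie in $\ZZ[1/p]\subseteq\QQ$, and the remaining steps are field operations). Hence every $\alpha_k^{(1)}\in\QQ(\alpha_0^{(1)},\ldots,\alpha_0^{(m)})$, so the product $\mu$ lies there too, and $\QQ(\mu)\subseteq\QQ(\alpha_0^{(1)},\ldots,\alpha_0^{(m)})$. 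Combining the two inclusions proves a).

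For part b), suppose for contradiction that $\mu\in\QQ$. By a) this forces $\alpha_0^{(i)}\in\QQ$ for every $i$, i.e. the input vector is rational; but then Proposition \ref{prop:finite} says the $p$--adic Jacobi--Perron algorithm terminates in finitely many steps, contradicting the fact that a purely periodic MCF is infinite (the partial quotients never vanish, since by \eqref{eq:conv} we have $|a_n^{(1)}|>1$ for all $n\geq1$, so the expansion cannot be finite). Alternatively, and perhaps more cleanly, one can argue that $\mu\in\QQ$ would make $(\alpha_0^{(1)},\ldots,\alpha_0^{(m)},1)$ a rational eigenvector, hence the vector $(1,\alpha_0^{(1)},\ldots,\alpha_0^{(m)})$ would be $\QQ$--proportional to a rational vector — automatically true — so the sharper route is the finiteness contradiction via Proposition \ref{prop:finite}.

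The main obstacle I anticipate is the careful justification in part a) that the eigenvector can be taken with coordinates in $\QQ(\mu)$ and that it is \emph{unique} up to scaling: this rests on $\mu$ being a simple root of $P(X)$, which is exactly what the strictly increasing Newton polygon slopes in Theorem \ref{thm:main} deliver, and on the elementary linear algebra fact that for a simple eigenvalue $\mu$ of a matrix over a field $\QQ(\mu)$, the eigenspace is a line defined over $\QQ(\mu)$. The other point requiring a word of care is ensuring the normalization (last coordinate $=1$) is legitimate — but this is immediate since $(\alpha_0^{(1)},\ldots,\alpha_0^{(m)},1)^T$ already has last coordinate $1$, so it \emph{is} the canonical generator of that line, forcing its other coordinates into $\QQ(\mu)$.
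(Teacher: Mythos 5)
Your proposal is correct and takes essentially the same route as the paper: for a) you use the simplicity of $\mu$ (from its strict $p$-adic dominance established in Theorem \ref{thm:main}) to get a one-dimensional kernel of $\mathcal{B}_{N-1}-\mu I_{m+1}$ over $\QQ(\mu)$, spanned by a vector necessarily proportional to $(\alpha_0^{(1)},\ldots,\alpha_0^{(m)},1)$, together with the easy inclusion $\mu\in\QQ(\alpha_0^{(1)},\ldots,\alpha_0^{(m)})$; for b) you deduce rationality of the $\alpha_0^{(i)}$ from a) and contradict periodicity via the finiteness of the algorithm on rational input, exactly as in the paper.
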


\begin{proof} \ \\
$a$) Since $\mu=\alpha^{(1)}_0\ldots \alpha^{(1)}_{N-1}$, obviously
     $\mu\in \QQ(\alpha^{(1)}_0,\ldots,\alpha^{(m)}_0)$; conversely by Theorem \ref{thm:main} the nullspace of $\mathcal{B}_{N-1}-\mu I_{m+1}\in M_{m+1}(\QQ(\mu))$  is $1$-dimensional (where $I_{m+1}$ is the $(m+1) \times (m+1)$ identity matrix and $M_{m+1}(\QQ(\mu))$ denotes the set of $(m+1) \times (m+1)$ matrices with entries in $\QQ(\mu)$).	
Therefore it is generated by a vector $\mathbf{\beta}=(\beta_1,\ldots, \beta_{m+1})$ with entries in $\QQ(\mu)$, which must be proportional to $(\alpha^{(1)}_0,\ldots,\alpha^{(m)}_0,1)$. It follows that $\alpha^{(i)}=\frac {\beta_i}{\beta_{m+1}}\in\QQ(\mu)$ for $i=1,\ldots, m$.\\
$b$) Assume that $\mu\in\QQ$, then $\alpha^{(1)}_0,\ldots,\alpha^{(m)}_0 \in\QQ$. But in this case the MCF corresponding to $(\alpha^{(1)}_0,\ldots,\alpha^{(m)}_0)$ is finite (see \cite{MT}), so that it cannot be periodic.
\end{proof}

From the previous theorems, we have that a periodic MCF converges to a $m$--tuple of algebraic irrationalities of degree less or equal than $m+1$,belonging to the field generated over $\QQ$  by the  the root greatest in $p$-adic norm of the characteristic polynomial. In the characteristic polynomial is irreducible, then the algebraic irrationalities are of the maximum degree. In the following, we see some further properties of the roots of the characteristic polynomial and then we focus on the case $m=2$ for some specific considerations.

\begin{lemma} \label{lemma}
Let $P(X) = X^{m+1} + \gamma_m X^m + \ldots + \gamma_1 X + (-1)^{m(N+1)+1}$ be the characteristic polynomial of the purely periodic MCF $(\alpha_0^{(1)}, \ldots, \alpha_0^{(m)}) = \left[\left(\overline{a_0^{(1)}, \ldots, a_{N-1}^{(1)}}\right), \ldots, \left(\overline{a_0^{(m)}, \ldots, a_{N-1}^{(m)}}\right)\right]$. We have that
\begin{itemize}
\item[a)] every $\gamma_i$ is a polynomial in $\ZZ[a^{(i)}_j, i=1,\ldots , m,\  j=0,\ldots N-1]$ and each monomial has the form $\lambda c_0c_1\ldots c_{N-1}$ where $\lambda\in\ZZ$ and $c_j=1$ or $c_j=a^{(i)}_j$ for some $i=1,\ldots, m$;
\item[b)] the monomial $a_0^{(1)} \cdots a_{N-1}^{(1)}$ appears only in $\gamma_m$.
\end{itemize}
\end{lemma}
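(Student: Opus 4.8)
The plan is to relate the coefficients $\gamma_i$ of the characteristic polynomial $P(X)$ directly to the minors of the matrix $\mathcal{M}=\mathcal{B}_{N-1}$ and then invoke Proposition \ref{prop:minors} (applied at index $n=N-1$). Recall that for a square matrix $\mathcal{M}$ of size $m+1$, the coefficient of $X^{m+1-r}$ in $P(X)=\det(X I - \mathcal{M})$ equals $(-1)^r$ times the sum of all $r\times r$ principal minors of $\mathcal{M}$. In particular $\gamma_m = -\operatorname{tr}(\mathcal{M})$, which is a sum of $1\times 1$ principal minors, and each $\gamma_i$ (for $i=1,\ldots,m$) is, up to sign, a sum of principal minors of $\mathcal{M}=\mathcal{B}_{N-1}$ of a fixed size. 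This reduces both parts of the Lemma to statements about principal minors of $\mathcal{B}_{N-1}$ that are exactly what Proposition \ref{prop:minors} provides.

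For part a), I would argue as follows. By Proposition \ref{prop:minors}a) (with $n=N-1$), every minor of $\mathcal{B}_{N-1}$ — in particular every principal minor — is a polynomial in $\ZZ[a^{(i)}_j : i=1,\ldots,m,\ j=0,\ldots,N-1]$ each of whose monomials has the form $\lambda c_0 c_1\cdots c_{N-1}$ with $\lambda\in\ZZ$ and each $c_j$ equal to $1$ or to some $a^{(i)}_j$. Since each $\gamma_i$ is a $\ZZ$-linear combination of such principal minors, it inherits the same shape: it is a polynomial in the same ring and each of its monomials is again of the form $\lambda c_0\cdots c_{N-1}$ with the stated constraints on the $c_j$. (The constant term $\gamma_0 = (-1)^{m(N+1)+1}$ is the determinant $\pm\det\mathcal{B}_{N-1} = \pm 1$, consistent with all $c_j = 1$; this matches the normalization already computed before Theorem \ref{thm:main}.)

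For part b), I would track where the special monomial $a_0^{(1)}\cdots a_{N-1}^{(1)}$ can occur. By Proposition \ref{prop:minors}b), this monomial appears in a principal minor of $\mathcal{B}_{N-1}$ only for the $1\times 1$ principal minor obtained by deleting all rows and columns indexed by $2,\ldots,m+1$, i.e. the $(1,1)$-entry $A^{(1)}_{N-1}$, and there with coefficient $\pm 1$. Now $\gamma_m=-\operatorname{tr}(\mathcal{M})$ is the (signed) sum of the $1\times 1$ principal minors, so the monomial $a_0^{(1)}\cdots a_{N-1}^{(1)}$ does appear in $\gamma_m$ (coming from the $(1,1)$-entry), while for every $i\le m-1$ the coefficient $\gamma_i$ is a combination of principal minors of size $m+1-i\ge 2$, none of which contains this monomial; hence it does not appear in $\gamma_i$ for $i<m$. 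Thus $a_0^{(1)}\cdots a_{N-1}^{(1)}$ appears only in $\gamma_m$, as claimed.

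The main obstacle is bookkeeping rather than conceptual: one must be careful that the size of the principal minors contributing to each $\gamma_i$ is correctly identified (size $m+1-i$, so that $\gamma_m$ gets the $1\times 1$ minors and only these can carry the distinguished monomial), and that the "principal minor" hypothesis in Proposition \ref{prop:minors}b) is genuinely what appears in the characteristic-polynomial expansion — which it is, since the coefficients of $\det(XI-\mathcal{M})$ are exactly sums of principal minors of $\mathcal{M}$. No delicate estimates are needed; the content of the Lemma is entirely combinatorial and is carried by Proposition \ref{prop:minors}.
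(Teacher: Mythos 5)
Your proposal is correct and follows essentially the same route as the paper: the authors likewise observe that each $\gamma_i$ is (up to sign) the sum of the principal minors of $\mathcal{B}_{N-1}$ of order $m+1-i$ and then invoke Proposition \ref{prop:minors}. Your version simply spells out the bookkeeping (which sizes of principal minors feed which coefficient, and why only the $1\times 1$ minor $A^{(1)}_{N-1}$ can carry the monomial $a_0^{(1)}\cdots a_{N-1}^{(1)}$) that the paper leaves implicit.
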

\begin{proof} 
Let us observe that any coefficient $\gamma_i$ is the sum of the principal minors of the matrix $\mathcal B_{N-1}$ of order $m+1-i$, for $i = 1, \ldots, m$. Hence the thesis follows from Proposition \ref{prop:minors}. \\
\end{proof}

\begin{theorem} \label{thm:normm1}
Given the purely periodic MCF 
\[(\alpha_0^{(1)}, \ldots, \alpha_0^{(m)}) = \left[\left(\overline{a_0^{(1)}, \ldots, a_{N-1}^{(1)}}\right), \ldots, \left(\overline{a_0^{(m)}, \ldots, a_{N-1}^{(m)}}\right)\right],\]
every root of its characteristic polynomial $P(X) = X^{m+1} + \gamma_m X^m + \ldots + \gamma_1 X + (-1)^{m(N+1)+1}$ has $p$-adic norm less than 1, except for the root greatest in $p$-adic norm $\mu=\alpha_0^{(1)}\cdots \alpha_{N-1}^{(1)}$.
\end{theorem}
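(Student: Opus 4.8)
The plan is to exploit the Newton polygon computation already carried out in the proof of Theorem \ref{thm:main}, together with the product formula $\mu=\alpha_0^{(1)}\cdots\alpha_{N-1}^{(1)}$ and the normalization $\gamma_0=(-1)^{m(N+1)+1}=\pm 1$. Recall from that proof that $v_p(\gamma_m)=-k$, $v_p(\gamma_i)=\mu_i-k(m+1-i)$ with $\mu_i>0$ for $i=1,\dots,m-1$, and $v_p(\gamma_0)=0$; and that the Newton polygon of $P(X)$ has exactly $m$ sides with strictly increasing slopes $s_1<s_2<\cdots<s_m$, the last one being $s_m=k$. Since the vertices of the Newton polygon are at $(0,v_p(\gamma_0))=(0,0)$ and $(m+1,v_p(\gamma_{m+1}))=(m+1,0)$ passing through $(m,-k)$, and the points $(i,v_p(\gamma_i))$ for $1\le i\le m-1$ lie strictly above the segment through $(m,-k)$ and $(m+1,0)$, the last side of the polygon is precisely the segment from $(m,-k)$ to $(m+1,0)$, which has length $1$ and slope $k$. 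By the standard theory of Newton polygons over $\QQ_p$ (\cite[Theorem 6.4.7]{Gou}), this means $P(X)$ has exactly one root of valuation $-k$, namely $\mu$ (its valuation is $v_p(\mu)=\sum_{i} v_p(\alpha_i^{(1)})=-k$, consistent with $|A_n^{(m+1)}|$ and \eqref{eq:conv}), and all the remaining $m$ roots have valuations equal to $-s_j$ for $j=1,\dots,m$ — wait, I should be careful with the sign convention; the roots corresponding to a side of slope $s$ have valuation $s$, so the roots other than $\mu$ have valuations $s_1,\dots,s_{m-1}$ together with $s_m=k$ counted once less — let me instead phrase it via the complementary side structure.

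Concretely, I would argue as follows. The product of all $m+1$ roots is $\pm\gamma_0=\pm 1$, so $\sum_{\text{all roots }\rho} v_p(\rho)=0$. One root is $\mu$ with $v_p(\mu)=-k<0$. Hence $\sum_{\rho\neq\mu}v_p(\rho)=k>0$, so certainly not all remaining roots can have nonnegative valuation unless... hmm, that gives the wrong direction. The cleaner route is the direct Newton-polygon reading: the slopes of the Newton polygon (in the convention where slope $=$ valuation of the corresponding roots) are exactly the negatives of what was computed, or equivalently one works with $X^{m+1}P(1/X)$. I would set $\tilde P(X)=X^{m+1}P(1/X)$, whose roots are the reciprocals of the roots of $P$; showing every root of $P$ other than $\mu$ has $p$-adic norm $<1$ is equivalent to showing every root of $\tilde P$ other than $1/\mu$ has norm $>1$, i.e. negative valuation, while $v_p(1/\mu)=k>0$. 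The Newton polygon of $\tilde P$ has the slope data of the computation reflected, its shortest-slope (leftmost) side of horizontal length $1$ corresponds to the single root $1/\mu$ of valuation $k$, and the strict increase of the slopes $s_1<\cdots<s_m$ guarantees all other sides have strictly smaller slope, hence those roots have valuation $<k$; a small additional check shows these valuations are in fact $\le 0$.

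The main obstacle is bookkeeping the sign/reciprocal conventions of the Newton polygon correctly and extracting from the strict monotonicity of $(s_1,\dots,s_m)$ the precise statement that the $m$ roots distinct from $\mu$ all have valuation $\ge 0$ (equivalently $p$-adic norm $\le 1$), and then upgrading $\le 1$ to $<1$. For the strict inequality I would observe that the product of the $m$ roots other than $\mu$ equals $\pm\gamma_0/\mu=\pm 1/\mu$, which has valuation $k>0$; since these $m$ valuations are nonnegative and sum to $k\ge 1$, at least one is positive, and in fact the Newton polygon shows each side other than the last has slope $s_j$ with $v_p$ of the associated roots $=s_j$; one then checks $s_j>0$ for every relevant $j$ using $s_j=v_p(\gamma_{?})/(\,\cdot\,)$ and the positivity $\mu_i>0$ — alternatively, and most simply, $P(X)/((X-\mu))$ has constant term $\pm 1/\mu$ with positive valuation and all its roots have valuation bounded below by the minimal slope, which the computation shows is $>0$, giving $p$-adic norm $<1$ for every root $\neq\mu$. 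This last clean reduction, modulo the routine Newton-polygon lemma, completes the proof.
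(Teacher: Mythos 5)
Your plan to read the theorem off the Newton polygon already computed for Theorem \ref{thm:main} breaks down exactly at the point you label ``a small additional check''. To conclude that every root other than $\mu$ has positive valuation you need every side of the Newton polygon of $P(X)$ except the last one (the side from $(m,-k)$ to $(m+1,0)$ of slope $k$) to have \emph{negative} slope, and this is equivalent to the coefficient bound $v_p(\gamma_i)>-k$ for $i=1,\ldots,m-1$. The computation in the proof of Theorem \ref{thm:main} does not give this: it only gives $\delta_i\equiv 0\pmod p$, i.e.\ $\mu_i=v_p(\delta_i)\geq 1$, hence $v_p(\gamma_i)\geq 1-k(m+1-i)$, which for $i<m$ and $k\geq 1$ is perfectly compatible with $v_p(\gamma_i)\leq -k$. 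Concretely, for $m=2$ nothing you invoke rules out $v_p(\gamma_1)=1-2k$: the polygon would then have vertices $(0,0),(1,1-2k),(2,-k),(3,0)$ with slopes $1-2k<k-1<k$, strictly increasing and with last side of length one and slope $k$ (so Theorem \ref{thm:main} and all the data you cite are satisfied), yet for $k\geq 2$ the middle side would force a root of norm $p^{k-1}>1$ different from $\mu$. So the inequality $v_p(\gamma_i)>-k$ for $i\neq m$ is genuinely new information, and it is precisely what the paper extracts from Lemma \ref{lemma}: each $\gamma_i$ is a sum of monomials $\lambda c_0\cdots c_{N-1}$ with $c_j\in\{1,a_j^{(1)},\ldots,a_j^{(m)}\}$, and the monomial $a_0^{(1)}\cdots a_{N-1}^{(1)}$ occurs only in $\gamma_m$, whence $|\gamma_i|<|a_0^{(1)}\cdots a_{N-1}^{(1)}|=|\mu|$ for $i\neq m$ by \eqref{eq:conv}. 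Your proposal never uses Lemma \ref{lemma}, and without it (or an equivalent estimate) the argument does not close.

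Your fallback attempts at strictness also do not work as stated: knowing that the $m$ roots different from $\mu$ have valuations summing to $k>0$ shows only that \emph{at least one} valuation is positive, not all of them, and the assertion that ``the minimal slope \ldots is $>0$'' is the unproved coefficient bound in disguise. Once $v_p(\gamma_i)>-k$ for $i\neq m$ is in hand (together with $v_p(\gamma_m)=-k$ and $v_p(\gamma_0)=0$), your Newton--polygon reading would indeed finish the proof and would be a legitimate alternative to the paper's conclusion, which instead argues via elementary symmetric functions: if $r$ roots had $p$-adic norm $\geq 1$, the ultrametric inequality would force $|\gamma_{m+1-r}|\geq|\mu|$, which by Lemma \ref{lemma} can happen only for $r=1$.
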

\begin{proof}
By Lemma \ref{lemma} and $|a_n^{(1)}|>1$,  $|a_n^{(1)}|>|a_n^{(j)}|$ for $n\in\NN$, $j=2,\ldots , m+1$, we have $|\gamma_i| \leq |a_0^{(1)} \cdots a_{N-1}^{(1)}|$, for any $i = 1, \ldots, m$. Moreover, this inequality becomes equality if and only if $i = m$. If $\lambda_1 = \mu, \lambda_2, \ldots, \lambda_k$ are the roots of $P(X)$ with $p$-adic norm $\geq 1$, then $\gamma_{m+1-k} \geq |\mu| = |a_0^{(1)}\cdots a_{N-1}^{(1)}|$. Recalling that $\gamma_{m+1-k}$ is also the $k$-th elementary symmetric function of the roots, this implies $k = 1$ and the thesis follows.
\end{proof}

\begin{theorem} \label{thm:gersh}
Let $z$ be a complex root of the characteristic polynomial $P(X)$ of the purely periodic MCF $\left[\left(\overline{a_0^{(1)}, \ldots, a_{N-1}^{(1)}}\right), \ldots, \left(\overline{a_0^{(m)}, \ldots, a_{N-1}^{(m)}}\right)\right]$. Then 
	$$|z|_\infty < {p^N} .$$
\end{theorem}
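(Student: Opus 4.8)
The plan is to bound $|z|_\infty$ by working with the factorisation $\mathcal{M}=\mathcal{B}_{N-1}=\mathcal{A}_0\mathcal{A}_1\cdots\mathcal{A}_{N-1}$ rather than with the (large) entries of $\mathcal{M}$ itself. First I would observe that, since $P(X)$ is by definition the characteristic polynomial of $\mathcal{M}$, a complex root $z$ of $P(X)$ is an eigenvalue of $\mathcal{M}$, so there is a nonzero vector $v\in\CC^{m+1}$ with $\mathcal{M}v=zv$. Equipping $\CC^{m+1}$ with the sup-norm and writing $\|\cdot\|$ for the induced operator norm on $(m+1)\times(m+1)$ matrices — which, as usual, equals the maximum of the absolute row sums — one gets immediately $|z|_\infty\le\|\mathcal{M}\|$, and then submultiplicativity of the operator norm yields $\|\mathcal{M}\|\le\prod_{j=0}^{N-1}\|\mathcal{A}_j\|$.

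The next step is to estimate a single factor $\|\mathcal{A}_j\|$. From \eqref{eq:matriciA} the $i$-th row of $\mathcal{A}_j$ has absolute row sum $|a_j^{(i)}|_\infty+1$ for $i=1,\ldots,m$, while the last row has absolute row sum $1$, so $\|\mathcal{A}_j\|=1+\max_{1\le i\le m}|a_j^{(i)}|_\infty$. Since every partial quotient lies in $\mathcal{Y}\subset\left(-\tfrac p2,\tfrac p2\right)$, we have $|a_j^{(i)}|_\infty<\tfrac p2$, hence $\|\mathcal{A}_j\|<1+\tfrac p2$. Combining the two estimates gives $|z|_\infty<\left(1+\tfrac p2\right)^N$, and since $p\ge 3$ we have $1+\tfrac p2<p$, whence $\left(1+\tfrac p2\right)^N<p^N$ and the theorem follows.

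I do not expect a genuine obstacle here: the only inequality requiring a moment of thought is $1+p/2<p$, which relies on $p\ge 3$ — automatic in the Browkin setting, since the digit set $\mathcal{Y}$ is non-degenerate only for odd primes. As an alternative one could instead invoke Gershgorin's circle theorem directly on $\mathcal{M}$, bounding its entries via Proposition \ref{prop:boundA} together with the companion estimate $|A_n^{(m+1)}|_\infty<p^{n+1}/2$ (same induction, with base value $A_0^{(m+1)}=1$), and then summing $\sum_{l=0}^m p^{N-l}/2<\tfrac{p^{N+1}}{2(p-1)}<p^N$; this reaches the same conclusion but is slightly more cumbersome, as the bottom row of $\mathcal{M}$ and the small values of $N$ must be handled separately, which is exactly what the factorisation argument avoids.
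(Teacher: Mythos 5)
Your proof is correct, but it follows a genuinely different route from the paper's. The paper applies Gershgorin's circle theorem directly to $\mathcal{M}=\mathcal{B}_{N-1}$ and bounds its entries by the inductive estimate of Proposition \ref{prop:boundA} ($|A^{(i)}_n|_\infty<p^{n+1}/2$), obtaining $|z|_\infty<\frac 12\sum_{k=1}^{m+1}p^{N-k+1}\leq p^N$; you instead never look at the entries of $\mathcal{M}$ at all, but use $|z|_\infty\le\|\mathcal{M}\|\le\prod_{j=0}^{N-1}\|\mathcal{A}_j\|<\left(1+\tfrac p2\right)^N<p^N$ with the row-sum operator norm, exploiting the factorization $\mathcal{M}=\mathcal{A}_0\cdots\mathcal{A}_{N-1}$ and the fact that each partial quotient lies in $\mathcal{Y}\subset\left(-\tfrac p2,\tfrac p2\right)$. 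Conceptually the two are close (Gershgorin on $\mathcal{M}$ is just the row-sum bound $|z|_\infty\le\|\mathcal{M}\|_\infty$), but what your version buys is threefold: it bypasses Proposition \ref{prop:boundA} entirely; it yields the sharper bound $\left(1+\tfrac p2\right)^N$; and it treats all rows and all period lengths uniformly, whereas the paper's appeal to Proposition \ref{prop:boundA} strictly speaking covers only $i=1,\ldots,m$ (not the last row of $\mathcal{B}_{N-1}$) and only nonnegative indices, so that the row $A^{(m+1)}_\bullet$ and the Kronecker-delta entries appearing when $N\le m$ need the small extra argument you indicate in your closing remark. The one hypothesis your route leans on, $1+\tfrac p2<p$, i.e.\ $p\ge 3$, is indeed implicit in the Browkin setting, so there is no gap.
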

\begin{proof}
By Gershgorin theorem \cite{Gers} there esists a row $j=1,\ldots, m+1$ in $\mathcal{B}_{N-1}$ such that 
$$|z-A^{(j)}_{N-j}|_\infty \leq \sum_{k=1,\ldots, m+1,\  k\not=j }|A^{(j)}_{N-k}|_\infty.$$ In particular 
\[|z|_\infty \leq \sum_{k=1}^{m+1}|A^{(j)}_{N-k}|_\infty < \frac 1 2 \sum_{k=1}^{m+1} p^{N-k+1}\]
by Proposition \ref{prop:boundA}. Moreover,
\[\frac 1 2 \sum_{k=1}^{m+1} p^{N-k+1} = \frac 1 2 p^{N-m}\sum_{k=0}^{m} p^{k} = \frac 1 2 p^{N-m}\frac {p^{m+1}-1}{p-1} \leq p^N.\]
\end{proof}

The previous theorems are useful in order to give some further information about the algebraic properties of the  limits of a periodic MCF. We firstly consider the case $N=1$:
\begin{proposition}
The characteristic polynomial of a purely periodic MCF with period $N=1$ does not have any rational root. In particular when $m=2$ the characteristic polynomial is irreducible over $\QQ$, and the  limits of the MCF generate a cubic field.
\end{proposition}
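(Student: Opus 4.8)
The plan is to compute the characteristic polynomial explicitly for $N=1$ and then eliminate every candidate rational root by playing off the $p$-adic and archimedean bounds already established. Since $\mathcal M=\mathcal B_0=\mathcal A_0$ in this case, a direct computation of $\det(XI_{m+1}-\mathcal A_0)$ gives
$$P(X)=X^{m+1}-a_0^{(1)}X^{m}-a_0^{(2)}X^{m-1}-\cdots-a_0^{(m)}X-1 .$$
All coefficients lie in $\ZZ[1/p]$, and both the leading coefficient and the constant term are units of $\ZZ[1/p]$; equivalently, writing $k=-v_p(a_0^{(1)})\ge 1$ (this is positive by \eqref{eq:conv}), the polynomial $p^{k}P(X)$ lies in $\ZZ[X]$ with leading coefficient $p^{k}$ and constant term $-p^{k}$. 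By the rational root theorem (over $\ZZ[1/p]$, or over $\ZZ$ after clearing denominators), every rational root $r$ of $P$ is a unit of $\ZZ[1/p]$, hence $r=\pm p^{j}$ for some integer $j$.

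It then remains to rule out both signs of $j$. If $j\ge 1$, then $|r|_\infty=p^{j}\ge p=p^{N}$, contradicting the Gershgorin bound of Theorem \ref{thm:gersh}. If $j\le 0$, then $|r|_p=p^{-j}\ge 1$; but by Theorems \ref{thm:main} and \ref{thm:normm1} the dominant root $\mu$ is the only root of $P$ with $p$-adic norm at least $1$ (and in fact $|\mu|_p>1$), so we would need $r=\mu$, contradicting $\mu\notin\QQ$ (Theorem \ref{thm:munoraz}(b)). Hence $P$ has no rational root. For $m=2$, $P$ is then a monic cubic over $\QQ$ without a rational root, so it is irreducible over $\QQ$; since $\mu$ is one of its roots, $[\QQ(\mu):\QQ]=3$, and by Theorem \ref{thm:munoraz}(a) the field $\QQ(\alpha_0^{(1)},\alpha_0^{(2)})$ generated by the limits of the MCF coincides with $\QQ(\mu)$, a cubic field.

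I do not expect a serious obstacle once the results of this section are available. The only points requiring attention are that the rational root theorem must be invoked over $\ZZ[1/p]$ rather than over $\ZZ$ (the partial quotients $a_0^{(i)}$ need not be integers), and that the two constraints doing the work are of genuinely different natures: an archimedean upper bound on the absolute value of every complex root on one side, and the purely $p$-adic fact that $\mu$ is the unique root of norm $\ge 1$ on the other.
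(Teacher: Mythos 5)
Your proof is correct and takes essentially the same route as the paper's: the rational root theorem forces any rational root to be $\pm p^{j}$, Theorems \ref{thm:normm1} and \ref{thm:munoraz} exclude $j\le 0$ (such a root would have $p$-adic norm $\ge 1$ and hence equal the irrational $\mu$), and the Gershgorin bound of Theorem \ref{thm:gersh} with $N=1$ excludes $j\ge 1$. Your explicit computation of $P(X)$ and the clearing of denominators before applying the rational root theorem are just more detailed versions of the paper's one-line argument.
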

\begin{proof}
Let $z$ be a rational root  of the characteristic polynomial; by the rational root theorem it must be (up to a sign) a power of $p$. By Theorem \ref{thm:munoraz} $b)$, we know that $z\not=\mu$, and this implies that  $v_p(z) \geq 1$  by Theorem \ref{thm:normm1}. But $|z|_\infty <p $ by Theorem \ref{thm:gersh}, a contradiction.\end{proof}
 In general, a rational root of a MCF with period of length $N$ must satisfy $|z|_\infty < p^N$ and $v_p(z) \geq 1$, so that for the rational root theorem it must be of the kind $\pm p^k$, with $k \leq N-1$. The next proposition gives a necessary condition for the existence of such a root, in the case 
  $m=2$ and $N=2$.

\begin{proposition}
Let us consider the purely periodic MCF $\left[ \left(\overline{a_0, a_1}\right), \left(\overline{b_0,b_1}\right) \right]$. Then its characteristic polynomial $P(X)$ is irreducible over $\QQ$ unless the following condition is verified, possibly interchanging the indices 0 and 1:
\begin{equation}\label{eq:condizirr}
\begin{array}{l} \bullet \hbox{  $a_0$ is of the form $\pm \frac  1 p + w$ with $w\in \ZZ, |w|_\infty\leq \frac{p-1} 2, w\not=0$; and}\\
\bullet \hbox{ either $v_p(a_1p+1)=v_p(a_1)+1 $ (which implies $v_p(b_1)=v_p(a_1)+1, v_p(b_0)=0$)} \\  \hbox{or $a_1$ is of the form $\pm \frac  1 p + u$ with $u\in \ZZ, |u|\leq \frac{p-1} 2, u\not=0$}; \\
\hbox{in the latter case one between $b_0$ and $b_1$ is zero and the other one is equal to $-wu\pm p$.}\end{array}\end{equation}
\end{proposition}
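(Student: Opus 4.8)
The plan is to turn irreducibility of $P(X)$ into the non‑existence of a rational root, to locate that root exactly, and then to read off \eqref{eq:condizirr} by a $p$‑adic digit analysis of the equation it must satisfy.

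First I would compute the characteristic polynomial. Writing $a_0=a_0^{(1)},\ a_1=a_1^{(1)},\ b_0=a_0^{(2)},\ b_1=a_1^{(2)}$, a direct multiplication $\mathcal M=\mathcal B_1=\mathcal A_0\mathcal A_1$ and expansion of $\det(XI_3-\mathcal M)$ give
$$P(X)=X^3-(a_0a_1+b_0+b_1)X^2+(b_0b_1-a_0-a_1)X-1=X^3+\gamma_2X^2+\gamma_1X-1 .$$
Since $\deg P=3$, $P$ is reducible over $\QQ$ exactly when it has a rational root $z$. The coefficients of $P$ lie in $\ZZ[1/p]$ and $P(z)=0$ gives $z(z^2+\gamma_2z+\gamma_1)=1$, so $z$ is a unit of $\ZZ[1/p]$, i.e.\ $z=\pm p^k$ for some $k\in\ZZ$. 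By Theorem~\ref{thm:munoraz}(b) $z\neq\mu$, hence $|z|<1$ by Theorem~\ref{thm:normm1}, so $k\geq1$; and $|z|_\infty<p^N=p^2$ by Theorem~\ref{thm:gersh}, so $k\leq1$. Thus the only possible rational root is $z=\varepsilon p$ with $\varepsilon\in\{1,-1\}$.

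Now suppose $P$ is reducible. As $\mu\notin\QQ$ is a root of $P$ (Theorem~\ref{thm:munoraz}), its minimal polynomial is the quadratic cofactor $P(X)/(X-\varepsilon p)$, so $[\QQ(\mu):\QQ]=2$ and, by Theorem~\ref{thm:munoraz}(a), $1,\alpha_0^{(1)},\alpha_0^{(2)}$ are $\QQ$‑linearly dependent. A purely periodic MCF does not terminate, so Corollary~\ref{cor:finitenessdimdue} applies and gives $v_p(a_n^{(1)})=-1$ for infinitely many $n$; by $2$‑periodicity this means $v_p(a_0)=-1$ or $v_p(a_1)=-1$, and interchanging the indices $0$ and $1$ if necessary I may assume $v_p(a_0)=-1$. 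Then \eqref{eq:conv} and \eqref{eq:alpha-norme} force $|b_0|<|a_0|=p$, hence $b_0\in\ZZ$ with $|b_0|_\infty\leq\frac{p-1}2$, and the Browkin expansion of $a_0$ is $a_0=x/p+w$ with $x,w\in\ZZ$, $1\leq|x|_\infty\leq\frac{p-1}2$, $|w|_\infty\leq\frac{p-1}2$. The core of the argument is then a digit‑by‑digit analysis of the identity $\varepsilon p^3+\gamma_2p^2+\gamma_1\varepsilon p-1=0$. Put $t=-v_p(a_1)\geq1$; clearing the lowest power of $p$ occurring (its exponent and $v_p(\gamma_1),v_p(\gamma_2)$ depend only on $t$ and on leading digits) and reading off coefficients of successive powers of $p$ modulo $p$, one finds the lowest coefficient is congruent mod $p$ to $x+\varepsilon$ times a unit — for $t=1$ it factors as $(x+\varepsilon)(y+\varepsilon)$ with $y$ the leading digit of $a_1p$ — whence $x=-\varepsilon$ (possibly after one further relabelling when $t=1$); that is, $a_0=\pm\frac1p+w$, the first bullet of \eqref{eq:condizirr}. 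With $x=-\varepsilon$ the next coefficient vanishes identically, one divides by $p$, and the remaining congruences, turned into equalities by the bound $|\cdot|_\infty\leq\frac{p-1}2$ on every Browkin digit, yield $w\neq0$ and split into two cases according to whether the leading digit of $a_1p$ (which exists only for $t=1$) equals $-\varepsilon$. If not — which for $t\geq2$ is automatic and is precisely the hypothesis $v_p(a_1p+\varepsilon)=v_p(a_1)+1$, written $v_p(a_1p+1)=v_p(a_1)+1$ in the statement — one obtains $v_p(b_0)=0$ and $v_p(b_1)=v_p(a_1)+1$ by ruling out $b_0=0$, $b_1=0$, $w=0$ via size contradictions. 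If so, then $t=1$, $a_1=\pm\frac1p+u$, and the reduced identity becomes (suitably signed) $b_0b_1-(wu+b_0+b_1)p+\varepsilon p^2=0$; this forces $p\mid b_0b_1$, hence $b_0=0$ or $b_1=0$ by size, and back‑substitution shows the nonzero one equals $-wu+\varepsilon p$ and that $u\neq0$. Collecting the two cases is \eqref{eq:condizirr}.

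The main obstacle is this last step: the careful bookkeeping of which power of $p$ each of $a_0a_1$, $b_0b_1$, $a_0$, $a_1$, $b_0$, $b_1$ contributes to, together with the separate treatment of $v_p(a_1)=-1$ versus $v_p(a_1)\leq-2$ — the Browkin expansions of $a_1$ and $b_1$ lengthen as $-v_p(a_1)$ grows, so that several (but boundedly many) coefficient levels must be examined, each time combining the vanishing of a coefficient with the range condition defining $\mathcal Y$ to pin a digit down; the small values $t=1,2,3$ also need minor ad hoc adjustments because of power collisions. Note finally that the sign $\varepsilon$ (i.e.\ whether $z=p$ or $z=-p$) propagates through all the $\pm$ signs appearing in \eqref{eq:condizirr}, which are to be read as correlated.
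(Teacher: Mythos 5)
Your proposal is correct and follows essentially the same route as the paper: the same computation of $P(X)$, the same use of Theorems \ref{thm:munoraz}, \ref{thm:normm1} and \ref{thm:gersh} to reduce the possible rational roots to $\pm p$, and then a $p$-adic valuation analysis of $P(\pm p)=0$ yielding exactly the dichotomy of \eqref{eq:condizirr} (including the correlation of signs that you rightly point out). The only differences are inessential: the detour through Theorem \ref{thm:munoraz}(a) and Corollary \ref{cor:finitenessdimdue} to get $v_p(a_0)=-1$ is not needed, since the vanishing of the lowest-valuation term of $P(\pm p)$ already forces $a_0=\mp\frac{1}{p}+w$, and the digit-by-digit bookkeeping you flag as the main obstacle collapses, as in the paper, to two short ultrametric comparisons — first on $(\pm a_0p+1)(\pm a_1p+1)$ against $\pm p(\pm p-b_0)(\pm p-b_1)$, then, after dividing by $\pm p$, on $w(\pm a_1p+1)$ against $(\pm p-b_0)(\pm p-b_1)$ — so no separate treatment of small values of $t=-v_p(a_1)$ is required.
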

\begin{proof}
Write $$P(X)=X^3+\gamma_2X^2+\gamma_1 X-1,$$
then $$\gamma_2=-(a_0a_1+b_0+b_1),\quad\quad \gamma_1=b_1b_0-a_0-a_1$$
so that
$$P(X)=X(X-b_0)(X-b_1)-(a_0X+1)(a_1X+1).$$
We put $k_1=-v_p(a_1), k_2=-v_p(a_2), k=k_1+k_2$.
By Theorems \ref{thm:normm1} and \ref{thm:gersh} the only possible rational roots of $P(X)$ are $\pm p$. So assume 
\begin{equation}
\label{eq:equazionp} 
P(\pm p)= \pm p(\pm p-b_0)(\pm p-b_1)-(\pm a_0p+1)(\pm a_1p+1)=0.\end{equation}
Notice that the valuation of the first summand is $\geq -k+3$ and that of the second summand is $\geq -k+2$.  Therefore, the valuation of the second summand must be $\geq -k+3$. This implies that at least one between $a_0$ and $a_1$, say $a_0$,  must satisfy $v(\pm a_0p+1)>-k_0+1$, that is $a_0p\equiv \mp 1\pmod p$. Since $a_0\in\mathcal{Y}$ this implies $a_0=\mp\frac  1 p+w$ with $w\in \ZZ, |w|_\infty \leq \frac{p-1} 2$ and \eqref{eq:equazionp} becomes 
\begin{equation}
\label{eq:equazionp2} 
\pm (\pm p-b_0)(\pm p-b_1)-w(\pm a_1p+1)=0.\end{equation}
We show that $w\not=0$: otherwise one between $b_0$ and $b_1$ should be equal to $\pm p$, which is  a contradiction because $b_0,b_1\in\mathcal{Y}$. \\
The right-hand side of \eqref{eq:equazionp2} has valuation $\geq -k_1+1$; and $v_p(\pm p-b_0)\geq 0$, $v_p(\pm p-b_1)\geq -k_1+1$. If the valuation of the right side is exactly  $-k_1+1$ then it must be $v(b_0)=0, v(b_1)=-k_1+1$. On the other hand, if  the valuation of the right side is $>-k_1+1$ then $a_1p\equiv \mp 1\pmod p$. As above  this implies $a_0=\mp\frac  1 p+u$ with $u\in \ZZ, |u|_\infty\leq \frac{p-1} 2, u\not=0$ and \eqref{eq:equazionp2} becomes
\begin{equation}
\label{eq:equazionp3} 
\pm (\pm p-b_0)(\pm p-b_1)-wup=0.\end{equation} This implies that one between $b_0$ and $b_1$ is $0$, the other one (say $b_i$) has valuation $0$,  and satisfies $\pm p-b_i=wu$.
\end{proof}
In order to provide numerical examples, the following proposition will be useful. 
\begin{proposition}\label{prop:trovaABC}
Let us consider the purely periodic 2-dimensional MCF $(\alpha, \beta) = \left[ \left(\overline{a_0, \ldots, a_{N-1}}\right), \left(\overline{b_0, \ldots, b_{N-1}}\right) \right]$ and suppose that its characteristic polynomial $P(X)$ is reducible. Let $z=\pm p^k$ be the (unique) rational root of $P(X)$, then  the $1$-dimensional eigenspace $\mathcal{L}\subseteq \QQ^3$ of the transpose of $\mathcal{B}_{N-1}$ associated to $z$ coincides with the space $\mathcal{L'}$ of rational vectors $(x,y,z)$ such that $x\alpha+y\beta+z=0$.
\end{proposition}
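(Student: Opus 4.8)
The plan is to establish the two inclusions $\mathcal{L}\subseteq\mathcal{L}'$ and $\mathcal{L}'\subseteq\mathcal{L}$, with all the real work in the first one; the second will follow from a cheap dimension count. First I would record two preliminary observations. Since $\alpha=\alpha_0^{(1)}\notin\QQ$ by Theorem \ref{thm:munoraz}(b), the $\QQ$-linear map $\QQ^3\to\QQ_p$, $(x,y,w)\mapsto x\alpha+y\beta+w$, has image of $\QQ$-dimension at least $2$ (it contains the linearly independent elements $1$ and $\alpha$), so its kernel $\mathcal{L}'$ has $\dim_\QQ\mathcal{L}'\leq 1$. On the other hand, $z$ being a root of the characteristic polynomial $P(X)$ of $\mathcal{B}_{N-1}$, hence also of $\mathcal{B}_{N-1}^T$, the space $\mathcal{L}=\ker(\mathcal{B}_{N-1}^T-zI)$ is nonzero. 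Therefore, once $\mathcal{L}\subseteq\mathcal{L}'$ is known we get $1\leq\dim\mathcal{L}\leq\dim\mathcal{L}'\leq 1$, so both spaces are one-dimensional and equal.

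For the inclusion $\mathcal{L}\subseteq\mathcal{L}'$, I would fix $(x_1,x_2,x_3)\in\mathcal{L}$, so that $\mathcal{B}_{N-1}^T(x_1,x_2,x_3)^T=z(x_1,x_2,x_3)^T$, and attach to it the sequence $S_n:=x_1A_{n-1}^{(1)}+x_2A_{n-1}^{(2)}+x_3A_{n-1}^{(3)}$ as in \eqref{eq:s}. Here it is important that formula \eqref{eq:traspostaB} is a purely formal identity between the $A_n^{(i)}$'s, valid for \emph{any} triple $(x_1,x_2,x_3)$ and not requiring a linear dependence relation; applying it with $n=N$ gives $(S_N,S_{N-1},S_{N-2})^T=\mathcal{B}_{N-1}^T(x_1,x_2,x_3)^T=z(x_1,x_2,x_3)^T$. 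Next I would substitute $n=N$ into \eqref{eq:alpha0} for $i=1,2$, use the periodicity identities $\alpha_N^{(1)}=\alpha$, $\alpha_N^{(2)}=\beta$, and identify the common denominator appearing there with $\mu$: by Proposition \ref{prop:sum-prod} it equals $\prod_{j=1}^{N}\alpha_j^{(1)}$, and this is $\alpha_0^{(1)}\cdots\alpha_{N-1}^{(1)}=\mu$ because $\alpha_N^{(1)}=\alpha_0^{(1)}$. Expanding $x_1\alpha+x_2\beta+x_3$ through these expressions and collecting the coefficients of $\alpha$, of $\beta$ and of $1$ reproduces precisely $S_N$, $S_{N-1}$, $S_{N-2}$, whence
$$x_1\alpha+x_2\beta+x_3=\frac{1}{\mu}\bigl(\alpha S_N+\beta S_{N-1}+S_{N-2}\bigr)=\frac{z}{\mu}\bigl(x_1\alpha+x_2\beta+x_3\bigr).$$
As $z\in\QQ$ and $\mu\notin\QQ$ (Theorem \ref{thm:munoraz}(b)), we have $z/\mu\neq 1$, forcing $x_1\alpha+x_2\beta+x_3=0$, i.e. $(x_1,x_2,x_3)\in\mathcal{L}'$.

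I do not expect a serious obstacle; the computation is short once the indices are lined up. The point requiring the most care is the interplay of hypotheses: the identities \eqref{eq:uno}--\eqref{eq:quattro} were derived assuming that $1,\alpha,\beta$ are $\QQ$-linearly dependent, which is not granted a priori, so the argument above must be built only on the unconditional facts \eqref{eq:alpha0}, \eqref{eq:traspostaB} and Proposition \ref{prop:sum-prod} — and indeed it recovers the linear dependence as a by-product, since it shows $\mathcal{L}'\neq 0$. A second, minor point that deserves a sentence is the assertion that $z$ is the \emph{unique} rational root of $P$: writing $P(X)=(X-z)R(X)$ with $R\in\QQ[X]$ of degree $2$, the root $\mu$ of $R$ is irrational, so $R$ is irreducible over $\QQ$ and contributes no further rational root; note also that the explicit shape $z=\pm p^k$ established before the proposition is not actually needed for this argument.
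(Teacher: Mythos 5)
Your proof is correct, but it runs in the opposite direction from the paper's. The paper first observes that $\dim_\QQ\mathcal{L}'=1$ (reducibility plus Theorem \ref{thm:munoraz} give $[\QQ(\alpha,\beta):\QQ]=2$), takes the integer generator $(x_1,x_2,x_3)$ of $\mathcal{L}'$, and uses the identity \eqref{eq:uno} at $n=N$ (legitimate there, since the dependence relation is available) to see that $(S_N,S_{N-1},S_{N-2})$ again lies in $\mathcal{L}'$, hence is a rational multiple of $(x_1,x_2,x_3)$; by \eqref{eq:traspostaB} this exhibits $(x_1,x_2,x_3)$ as an eigenvector of $\mathcal{B}_{N-1}^T$ with rational eigenvalue, which must be $z$, giving $\mathcal{L}'\subseteq\mathcal{L}$ and then equality. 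You instead prove $\mathcal{L}\subseteq\mathcal{L}'$: starting from an arbitrary rational eigenvector for $z$, you cannot invoke \eqref{eq:uno} (as you correctly note, it presupposes the dependence relation), so you rebuild the relevant identity from \eqref{eq:alpha0}, \eqref{eq:traspostaB} and Proposition \ref{prop:sum-prod}, identify the common denominator with $\mu$, and conclude from $z/\mu\neq 1$; the dependence relation then comes out as a consequence rather than an input. Both arguments use the same $S_n$-machinery, but yours has the small advantage of not leaning on the degree computation $[\QQ(\alpha,\beta):\QQ]=2$ for the key inclusion, while the paper's is shorter because \eqref{eq:uno} is already on the shelf. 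One small imprecision: Theorem \ref{thm:munoraz}(b) says $\mu\notin\QQ$, not that $\alpha\notin\QQ$; what you actually need for $\dim_\QQ\mathcal{L}'\leq 1$ is only that $1,\alpha,\beta$ are not all rational, which follows from parts (a) and (b) together (or simply because a rational input would give a finite, not periodic, expansion), so replace ``$1$ and $\alpha$'' by ``$1$ and whichever of $\alpha,\beta$ is irrational'' and the dimension count stands.
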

\begin{proof} Notice firstly that the space $\mathcal{L'}$ is one-dimensional, because  Theorem \ref{thm:munoraz} and the reducibility of $P(X)$ imply that $[\QQ(\alpha,\beta):\QQ]=2$. Therefore there is a linear dependence relation
$$x_1\alpha +x_2\beta +x_3=0$$
with coprime $x_1,x_2,x_3\in\ZZ$, and $(x_1,x_2,x_3)$ generates $\mathcal{L'}$.
Since $\alpha_N=\alpha   $, $\beta_N=\beta$,  by  property \eqref{eq:uno} of the sequence $(S_n)$ defined by \eqref{eq:s},  the vector $(S_N, S_{N-1}, S_{N-2})$ must be a rational multiple of $(x_1,x_2,x_3)$; then by \eqref{eq:quattro}  $(x_0,y_0,z_0)$ is an eigenvector associated to a rational eigenvalue, so that it belongs to $\mathcal{L}$. 
\end{proof}

\begin{example}
     Condition \eqref{eq:condizirr} is essential. Consider the following examples.

     \begin{itemize}
     \item For $p=5$, the periodic MCF $(\alpha, \beta) = \left[\left(\overline{\frac 4 5, \frac  {11}5}\right), \left(\overline{1, 2}\right)\right]$ has characteristic polynomial 
     $$P(X)=X^3-\frac{119}{25} X^2-X-1 $$
     and
     $$P(X)=(X-5)\left (X^2-\frac 6 {25}X+\frac 1 5\right ).$$
    Moreover by using Proposition \ref{prop:trovaABC} we find the linear dependence relation between $\alpha, \beta$ and $1$:
     $$20\alpha + 5\beta + 4=0.$$
     \item For $p=3$, the periodic MCF $(\alpha, \beta) = \left[\left(\overline{\frac 2 3, \frac  {5}3}\right), \left(\overline{1, 0}\right)\right]$ has characteristic polynomial
     \begin{equation*} P(X) =X^3-\frac{19}{9} X^2-\frac 7 3 X-1=(X-3)\left (X^2+\frac 8 {9}X+\frac 1 3\right ).\end{equation*}
and
     $$6\alpha+3\beta +2=0.$$
     
     \item For $p=3$, the periodic MCF $(\alpha, \beta) = \left[\left(\overline{\frac 2 3, \frac  {13}9}\right), \left(\overline{1, \frac{1}{3}}\right)\right]$ has characteristic polynomial
     \begin{equation*} P(X) =X^3-\frac{62}{27} X^2-\frac {16}{9} X-1 =(X-3)\left (X^2+\frac {19} {27}X+\frac 1 3\right ).\end{equation*}
		The linear dependence relation between $\alpha, \beta, 1$ is the same as in the previous case:
     $$6\alpha + 3\beta + 2=0.$$
     \end{itemize}
\end{example}     
The above examples also show  $\QQ$-linearly dependent numbers having a periodic (hence not finite) expansion by the $p$-adic Jacobi--Perron algorithm.\\
     At the present time we were not able to find examples of $m$-tuples of $\QQ$-linearly dependent $p$-adic numbers whose MCF is infinite and not periodic. Therefore, we state the following 
     
     \begin{conjecture} Let $\bm{\alpha}= (\alpha^{(1)}, \ldots, \alpha^{(m)})\in \QQ_p^m$ be such that $1, \alpha^{(1)},\ldots, \alpha^{(m)}$ are $\QQ$-linearly dependent. Then the $p$-adic Jacobi-Perron algorithm for $\bm{\alpha}$ is finite or periodic.
     \end{conjecture}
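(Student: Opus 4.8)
We outline a strategy toward this conjecture; the case $\bm{\alpha}\in\QQ^m$ is already settled by Proposition~\ref{prop:finite}, so assume $\bm{\alpha}\in\QQ_p^m\setminus\QQ^m$ (for $m=2$ this makes the relation \eqref{eq:lindeprel}, and hence the integer sequence $(S_n)$ of Section~\ref{sec:fin}, unique). Since the transition $\bm{\alpha}_n\mapsto\bm{\alpha}_{n+1}$ of \eqref{eq:alg} is deterministic, it suffices to prove that the set of complete-quotient tuples $\{(\alpha_n^{(1)},\dots,\alpha_n^{(m)}):n\geq 0\}$ is finite: then the algorithm either terminates or some tuple recurs, and in the latter case the expansion is eventually periodic (purely periodic after deleting an initial segment), which is the assertion.

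To control this set one would use $(S_n)$. By Corollary~\ref{cor:ennesuemme} the numbers $\widetilde S_{n-j}:=S_{n-j}/p^{\lfloor n/m\rfloor}$ are integers for $0\leq j\leq m$, and dividing \eqref{eq:uno} by $p^{\lfloor n/m\rfloor}$ shows that $(\alpha_n^{(1)},\dots,\alpha_n^{(m)},1)$ lies on the $\QQ$-rational affine hyperplane cut out by the integer vector $(\widetilde S_n,\dots,\widetilde S_{n-m})$. If this vector stays bounded in the Euclidean norm it takes finitely many values, so $\bm{\alpha}_n$ is confined to finitely many hyperplanes; feeding this into the $p$-adic size relations \eqref{eq:conv}--\eqref{eq:alpha-norme} (as in the proof of Theorem~\ref{teo:finitenessgen} and the Remark closing Section~\ref{sec:fin}) bounds $v_p(a_n^{(1)})$, and hence every $v_p(\alpha_n^{(i)})$, from below; then each $a_n^{(i)}=s(\alpha_n^{(i)})$ lies in the finite subset of $\mathcal{Y}$ of elements with bounded denominator, so the partial quotients take finitely many values, and --- exactly as in the real case, where finiteness of the set of triples $(S_n,S_{n-1},S_{n-2})$ is promoted to finiteness of the set of complete-quotient triples --- one would upgrade this to finiteness of $\{\bm{\alpha}_n\}$. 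In this branch one obtains ``finite or eventually periodic'', i.e.\ the conjecture (note that, unlike in $\RR$, the periodic alternative cannot be discarded, as the examples of Section~\ref{sec:per} show).

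The real difficulty is the remaining branch, in which $(S_n/p^{\lfloor n/m\rfloor})$ is unbounded in $\RR$. Corollary~\ref{cor:liminftyS} only yields $S_n=o(p^n)$, far weaker than the $S_n=O(p^{n/m})$ that boundedness of $(\widetilde S_n)$ would require, and the periodic examples of Section~\ref{sec:per} show that no such estimate can hold in general: there the Euclidean growth rate of $(S_n)$ is governed, via \eqref{eq:quattro} together with Theorems~\ref{thm:normm1} and~\ref{thm:gersh}, by the $p$-adically non-dominant roots of the characteristic polynomial, whose Euclidean absolute values are only bounded by $p^N$. Hence one must show directly that fast Euclidean growth of $(S_n)$ --- which by \eqref{eq:due} is rigidly determined by the matrices $\mathcal{A}_n$, while by \eqref{eq:traspostaB} the vectors $(S_n,\dots,S_{n-m})$ remain tied to the single fixed $\QQ$-rational direction $(x_1,\dots,x_{m+1})$ --- already forces $(\mathcal{A}_n)$ to be eventually periodic. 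A natural line of attack is a renormalization/compactness argument: the admissible transition matrices are constrained by \eqref{eq:conv}; their action on $\mathbb{P}^m(\QQ_p)$, combined with the fixed rational hyperplane from \eqref{eq:lindeprel}, should leave only finitely many normalized possibilities for each window $(\mathcal{A}_n,\dots,\mathcal{A}_{n+m})$; and a convergent subsequence of shifts ought to produce a genuine period. Making this rigorous --- in particular deriving actual periodicity rather than mere recurrence of finitely many local patterns --- is the step I expect to be genuinely hard, and is in effect the entire content of the conjecture.
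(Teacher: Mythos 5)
This statement is the paper's closing \emph{conjecture}: the authors state it precisely because they could neither prove it nor find a counterexample, so there is no proof in the paper to compare yours against, and your submission has to stand on its own as a complete argument. It does not: as you yourself say, the decisive case --- the branch in which $\left(S_n/p^{\lfloor n/m\rfloor}\right)$ is unbounded in the Euclidean norm --- is left entirely open, with only a hoped-for renormalization/compactness scheme and no mechanism for upgrading recurrence of finitely many normalized windows $(\mathcal{A}_n,\ldots,\mathcal{A}_{n+m})$ to genuine eventual periodicity of the whole sequence of partial quotients. Since that branch is exactly where the periodic examples of Section~\ref{sec:per} live, the missing step is not a technical loose end but the entire content of the conjecture; what you have written is a sensible research plan (and its ingredients --- Proposition~\ref{prop:finite} for the rational case, the sequence $(S_n)$ of Section~\ref{sec:fin}, Corollary~\ref{cor:ennesuemme}, Corollary~\ref{cor:liminftyS}, the determinism of \eqref{eq:alg}) are used correctly, but it is not a proof.

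Moreover, even your ``bounded'' branch contains an unproved promotion step. Finitely many values of the integer vector $(\widetilde S_n,\ldots,\widetilde S_{n-m})$ only confine $(\alpha_n^{(1)},\ldots,\alpha_n^{(m)},1)$ to finitely many rational hyperplanes via \eqref{eq:uno}; each such hyperplane contains infinitely many points, and even after you bound $v_p(a_n^{(1)})$ from below and conclude that the partial quotients range over a finite subset of $\mathcal{Y}$, this does not by itself make the set $\{\bm{\alpha}_n\}$ finite, because each complete quotient is determined by the whole infinite tail of partial quotients, of which there are infinitely many built from a finite alphabet. The closing Remark of Section~\ref{sec:fin}, which you invoke, is itself only a sketch, stated for $m=2$ under the boundedness hypothesis, and it concludes \emph{finiteness}, not the dichotomy ``finite or periodic''; to make your version rigorous you would need an explicit argument showing that a repetition of the window $(S_n,\ldots,S_{n-m})$, combined with the recursions \eqref{eq:due} and \eqref{eq:traspostaB} and the size constraints \eqref{eq:conv}--\eqref{eq:alpha-norme}, actually forces a repetition of the complete-quotient tuple. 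Until both this step and the unbounded branch are supplied, the conjecture remains exactly as open as the paper leaves it.
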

  \section*{Acknowledgments}
  We thank Matteo Semplice for stimulating conversations.

\end{document}